\def\titlerunning#1{\gdef\titrun{#1}}
\def\author#1{\gdef\autrun{\def\and{\unskip, }#1}\gdef\@author{#1}}
\def\address#1{{\def\and{\\\hspace*{18pt}}\renewcommand{\thefootnote}{}%
\footnote {#1}}%
\markboth{\autrun}{\titrun}}
\def\email#1{e-mail: #1}
\def\subjclass#1{{\renewcommand{\thefootnote}{}%
\footnote{\emph{Mathematics Subject Classification $(2010)$:} #1}}}
\def\keywords#1{\par\medskip
\noindent\textbf{Keywords.} #1}
\newtheorem{Theorem}{Theorem}[section]
\newtheorem{Lemma}[Theorem]{Lemma}
\newtheorem{Corollary}[Theorem]{Corollary}
\newtheorem{Proposition}[Theorem]{Proposition}\theoremstyle{definition}
\theoremstyle{definition}
\newtheorem{Remark}[Theorem]{Remark}
\newtheorem{Notation}[Theorem]{Notation}
\numberwithin{equation}{section}
\begin{document}


\baselineskip=17pt


\titlerunning{Regularity of stable solutions to semilinear elliptic equations on Riemannian models}

\title{Regularity of stable solutions to semilinear elliptic equations on Riemannian models}

\author{Daniele Castorina
\and
Manel Sanch\'on}

\date{}

\maketitle

\address{D. Castorina: Universit\'a di Roma ``Tor Vergata'',
Dipartimento di Matematica, Via della Ricerca Scientifica, 00133
Roma, Italy; \email{castorin@mat.uniroma2.it}
\and
M. Sanch\'on: Centre de Recerca Matem\`atica and 
Universitat Aut\`onoma de Barcelona, 
Campus de Bellaterra, Edifici C, 08193 Bellaterra (Barcelona), Spain; 
\email{manel.sanchon@gmail.com}}

\subjclass{Primary
35K57;  
Secondary
35B65.  
}


\begin{abstract}
We consider the reaction-diffusion problem $-\Delta_g u = f(u)$ in $\mathcal{B}_R$ with zero 
Dirichlet boundary condition, posed in a geodesic ball $\mathcal{B}_R$ with radius 
$R$ of a Riemannian model $(M,g)$. This class of Riemannian manifolds includes the 
classical \textit{space forms}, \textit{i.e.}, the Euclidean, elliptic, and 
hyperbolic spaces.
For the class of semistable solutions we prove radial symmetry and monotonicity. 
Furthermore, we establish $L^\infty$, $L^p$, and $W^{1,p}$ estimates which are optimal and do not 
depend on the nonlinearity $f$. As an application, under standard assumptions on the nonlinearity 
$\lambda f(u)$, we prove that the corresponding extremal solution $u^*$ is bounded whenever 
$n\leq9$. To establish the optimality of our regularity results we find the extremal solution 
for some exponential and power nonlinearities using an improved weighted Hardy inequality.

\keywords{semistable and extremal solutions, elliptic and hyperbolic spaces, 
\textit{a priori} estimates, improved Hardy inequality}
\end{abstract}
\section{Introduction}\label{intro}

This article is concerned with semilinear elliptic reaction-diffusion problems 
on Riemannian manifolds. We are interested in the class of semistable solutions, 
which include local minimizers, minimal solutions, extremal solutions, and also 
certain solutions found between a sub and a supersolution. On any geodesic ball, 
we show that semistable solutions are radially symmetric and decreasing. Then, 
we establish $L^\infty$, $L^p$, and $W^{1,p}$ \textit{a priori} estimates for 
solutions in this class. As an application we obtain sharp regularity results 
for extremal solutions. To show the optimality of our regularity results we find 
the extremal solution for some exponential and power nonlinearities. This will 
follow by using an improved weighted Hardy inequality for radial functions.

We point out that the regularity properties we achieve in this paper represent a 
geometrical extension of the ones carried out by Cabr\'e and Capella in 
\cite{CC05} for the Euclidean case. As in \cite{CC05}, our results do not depend 
on the specific form of the nonlinearity in the reaction term and they show that 
the class of semistable solutions enjoys better regularity properties than general 
solutions.

More specifically, let $f$ be any locally Lipschitz positive nonlinearity 
and consider the following semilinear elliptic problem
\begin{equation}\label{probintro}
\left\{
\begin{array}{rcll}
-\Delta_{g} u &=& f(u)  &\mbox{in}\, \mathcal{B}_R, \\
u &>& 0 &\mbox{in}\, \mathcal{B}_R,\\
u &=& 0 &\mbox{on}\, \partial \mathcal{B}_R,
\end{array}
\right.
\end{equation}
posed on a geodesic ball $\mathcal{B}_R$, with radius $R$, of a \textit{Riemannian model} 
$(M,g)$. That is, a manifold $M$ of dimension $n \geq 2$ admitting a pole $O$ and whose metric 
$g$ is given, in spherical/polar coordinates around $O$, by
\begin{equation}\label{polar}
ds^2 = dr^2 + \psi(r)^2 d\Theta^2 \quad \textrm{ for }r\in(0,R)
\textrm{ and } \Theta \in \mathbb{S}^{n-1},
\end{equation}
where $r$ is the geodesic distance of the point $P=(r,\Theta)$ to the 
pole $O$, $\psi$ is a smooth positive function in $(0,R)$, and 
$d\Theta^2$ is the canonical metric on the unit sphere $\mathbb{S}^{n-1}$.  
A similar setting has been recently considered by Berchio, Ferrero, and Grillo 
\cite{BFG} in order to study stability and qualitative properties of radial solutions
to the Lane-Emden-Fowler equation, where $f(u) = |u|^{m-1} u$ with $m>1$,  
on certain classes of Cartan-Hadamard manifolds with infinite volume and negative sectional curvatures.

Observe that \eqref{polar} defines the metric only away from the origin. 
From \cite{GS} and \cite{KW}, in order to extend in a $C^2$ manner the metric 
$d s^2$ to the whole $\mathbb{R}^n$ it is sufficient to impose the following conditions:
\begin{equation}\label{hyppsi}
\psi(0) = \psi''(0) = 0 \quad\textrm{and}\quad \psi'(0) = 1.
\end{equation}
Important consequences of the above hypotheses \eqref{hyppsi}, as discussed in \cite{GS},
are that on geodesic balls of $M$ the Laplace-Beltrami operator $-\Delta_g$ is uniformly elliptic and its $L^2$ spectrum is bounded away from zero.
 
Our purpose is to study the regularity of semistable solutions of 
\eqref{probintro}. We say that a classical solution $u\in C^2(\mathcal{B}_R)$ 
of \eqref{probintro} is \textit{semistable} if the linearized operator at $u$ 
is nonnegative definite, \textit{i.e.},
\begin{equation}\label{semistability}
\int_{\mathcal{B}_R} |\nabla_{g} \xi|^2 \, dv_{g}
\geq 
\int_{\mathcal{B}_R} f'(u) \xi^2 \, dv_{g} \quad
\textrm{for all } \, \xi \in C^1_{0} (\mathcal{B}_R).
\end{equation}

The following theorem establishes radial symmetry and monotonicity properties 
of semistable classical solutions $u\in C^2(\mathcal{B}_R)$. By a \textit{radially  
symmetric and decreasing function}  $u\in C^2(\mathcal{B}_R)$ we mean a function $u$ 
such that $u=u(r)$, with $r=|x|$, and $u_r(r)=(du/dr)(r)<0$ for all $r\in(0,R)$.

\begin{Theorem}\label{radsemistab}
Let $f$ be a locally Lipschitz positive function. Assume that $\psi\in C^2([0,R])$ 
is positive in $(0,R]$ and satisfies \eqref{hyppsi}.
If $u \in C^2(\mathcal{B}_R)$ is a semistable solution of \eqref{probintro}, 
then it is radially symmetric and decreasing.
\end{Theorem}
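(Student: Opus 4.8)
The plan is to exploit the rotational symmetry of the model metric together with the semistability inequality, in the spirit of the angular-derivative argument for stable solutions used by Cabr\'e and Capella. Since $ds^2 = dr^2 + \psi(r)^2\, d\Theta^2$ is invariant under the action of $SO(n)$ fixing the pole $O$, each rotation of a coordinate $2$-plane generates a Killing vector field $X$ on $\mathcal{B}_R$. First I would record the key algebraic fact that the flow of $X$ consists of isometries, so that $\Delta_g$ commutes with the directional derivative $\partial_X$; differentiating the equation $-\Delta_g u = f(u)$ along $X$ then yields that $w := \partial_X u$ solves the linearized equation
\begin{equation*}
-\Delta_g w = f'(u)\, w \quad \textrm{in } \mathcal{B}_R.
\end{equation*}
If $f$ is merely locally Lipschitz rather than $C^1$, one makes this differentiation rigorous by approximating $f$ with $C^1$ nonlinearities (or by working with the a.e.\ derivative and a difference-quotient argument).

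Next I would examine the boundary behaviour of $w$. Every rotational Killing field $X$ is tangent to the geodesic sphere $\partial\mathcal{B}_R$, and $u\equiv 0$ there, so $w = X(u) = 0$ on $\partial\mathcal{B}_R$; moreover $X$ vanishes at the pole, whence $w(O)=0$ as well. Using the conditions \eqref{hyppsi}, which guarantee that $-\Delta_g$ is uniformly elliptic on $\mathcal{B}_R$, standard elliptic regularity gives that $w$ is continuous up to the boundary, so $w \in H^1_0(\mathcal{B}_R)$ and is an admissible element for the spectral analysis of the linearized operator $L := -\Delta_g - f'(u)$.

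The heart of the argument is then to deduce $w\equiv 0$ from semistability. The inequality \eqref{semistability} says precisely that the first Dirichlet eigenvalue $\lambda_1(L)$ is nonnegative. If $\lambda_1(L)>0$, then $L$ is positive definite and $Lw=0$ with $w\in H^1_0(\mathcal{B}_R)$ forces $w\equiv 0$. If instead $\lambda_1(L)=0$, the kernel of $L$ is one-dimensional and spanned by a first eigenfunction of constant sign; thus $w$, lying in this kernel, would have to be of constant sign. But integrating the angular derivative over a full revolution in the relevant coordinate plane gives
\begin{equation*}
\int_0^{2\pi} w \, d\theta = 0
\end{equation*}
along every orbit of the flow of $X$, so a constant-sign $w$ with vanishing orbital integral must vanish identically. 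This contradiction yields $w\equiv 0$ in both cases. Applying this to each rotation generator shows that $u$ is invariant under all of $SO(n)$, that is, $u=u(r)$ is radially symmetric.

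Finally, monotonicity follows from the radial form of the equation. Writing it for $u=u(r)$ as $-(\psi^{n-1}u')' = \psi^{n-1} f(u)$, with the regularity condition $u'(0)=0$ forced by smoothness at the pole, integration on $(0,r)$ gives
\begin{equation*}
\psi(r)^{n-1}\, u'(r) = -\int_0^r \psi(s)^{n-1} f(u(s))\, ds < 0 \quad \textrm{for } r\in(0,R],
\end{equation*}
since $f>0$ and $\psi>0$; hence $u_r<0$ throughout, as claimed. I expect the main obstacle to be the rigorous justification of the commutation $\partial_X\Delta_g = \Delta_g\partial_X$ and of the linearized equation under the weak regularity of $f$, together with the careful treatment of the degenerate case $\lambda_1(L)=0$, where the orbital-integral argument is essential to exclude a nontrivial angular derivative.
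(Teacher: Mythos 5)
Your proposal is correct and follows essentially the same route as the paper: differentiate the equation along the rotational (angular/Killing) directions, observe that the resulting derivative lies in $H^1_0(\mathcal{B}_R)$ and satisfies the linearized equation, use semistability to conclude it would have to be a signed first eigenfunction unless it vanishes, and rule out the signed case by periodicity (vanishing of the integral over closed orbits); monotonicity then follows by integrating the radial ODE exactly as in the paper. The only cosmetic difference is your phrasing via Killing fields and the kernel of $L$ rather than the Rayleigh quotient, which does not change the substance of the argument.
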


The proof of Theorem \ref{radsemistab} makes no use of moving plane arguments 
as usual. Instead, the radial symmetry relies on the fact that, due to the 
semistability, any angular derivative of $u$ would be either a sign changing first 
eigenfunction of the linearized operator at $u$ or identically zero. However, the 
first assertion cannot hold since the first eigenfunction of the linearized operator 
should be positive.
The monotonicity is then a trivial consequence of 
the positivity of the nonlinearity $f$.

Our first main result establishes \textit{a priori} estimates for semistable classical 
solutions of \eqref{probintro}. This result is useful in order to obtain the 
regularity solutions, \textit{a priori} possibly singular, that can be obtained as the limit 
of semistable classical solutions (see for instance the application on minimal 
and extremal solutions below).

\begin{Theorem}\label{thm1}
Assume that $\psi\in C^2([0,R])$ is positive in $(0,R]$ and satisfies \eqref{hyppsi}.
Let $f$ be a locally Lipschitz positive function and  
\begin{equation}\label{crit:exp}
p_0:=\frac{2n}{n- 2\sqrt{n-1}-4}
\quad \textrm{and}\quad
p_1:=\frac{2n}{n- 2\sqrt{n-1}-2}.
\end{equation}

If $u\in C^2(\mathcal{B}_R)$ is a semistable solution of \eqref{probintro}, 
then the following assertions hold:
\begin{itemize}
\item[$(a)$] If $n\leq9$ then there exists a constant $C_{n,\psi}$ depending only on
$n$ and $\psi$ such that 
\begin{equation}\label{Linfty:estimate}
 \Vert u\Vert_{L^\infty(\mathcal{B}_R)} \leq C_{n,\psi} \| u \|_{L^1 (\mathcal{B}_R)}.
\end{equation}

\item[$(b)$] If $n \geq 10$ then there exist constants $C_{n,\psi,p}$ and 
$\overline{C}_{n,\psi,p}$ depending only on $n$, $\psi$, and $p$ such that 
\begin{equation}\label{Lp:estimate}
\| u \|_{L^p(\mathcal{B}_R)}\leq C_{n,\psi,p} \| u \|_{L^1(\mathcal{B}_R)} 
\qquad \textrm{for all }
p<p_0
\end{equation}
and 
\begin{equation}\label{W1p:estimate}
\| u \|_{W^{1,p}(\mathcal{B}_R)}\leq \overline{C}_{n,\psi,p} \| u \|_{L^1(\mathcal{B}_R)} 
\qquad \textrm{for all }
p<p_1.
\end{equation}
\end{itemize}
\end{Theorem}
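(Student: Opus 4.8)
The plan is to mimic the Cabré–Capella approach from \cite{CC05}, exploiting the radial symmetry established in Theorem \ref{radsemistab}. By that theorem, $u=u(r)$ with $u_r<0$, so the semistability inequality \eqref{semistability} becomes a one-dimensional weighted inequality. First I would rewrite the volume element and gradient in polar coordinates: $dv_g = \psi(r)^{n-1}\,dr\,d\Theta$ and, for radial test functions $\xi=\eta(r)$, $|\nabla_g\xi|^2=\eta'(r)^2$. The crucial step is to insert a test function of the form $\xi=u_r\,\zeta$, where $\zeta=\zeta(r)$ is a radial cutoff, into \eqref{semistability}. Differentiating the equation $-\Delta_g u=f(u)$ in $r$ produces an equation for $w:=u_r$ of the form $-\Delta_g w + \left((n-1)\frac{\psi''\psi-(\psi')^2}{\psi^2}\right)w=f'(u)w$ (the curvature term coming from the $r$-derivative of the weight $(n-1)\psi'/\psi$). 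Combining this with the semistability inequality and integrating by parts yields, after cancelling the $f'(u)$ terms, a weighted Hardy-type inequality

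\begin{equation}\label{plan:hardy}
\int_0^R \left( (n-1)\frac{(\psi')^2-\psi\psi''}{\psi^2}\right) w^2\,\zeta^2\,\psi^{n-1}\,dr
\leq \int_0^R w^2\,(\zeta')^2\,\psi^{n-1}\,dr .
\end{equation}

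The hypotheses \eqref{hyppsi} guarantee that near $r=0$ the weight $\psi(r)\sim r$, so that $\psi^{n-1}\sim r^{n-1}$ and the coefficient $(n-1)\frac{(\psi')^2-\psi\psi''}{\psi^2}\sim \frac{n-1}{r^2}$, exactly reproducing the Euclidean Hardy constant that governs the critical exponents in \eqref{crit:exp}.

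Next I would exploit \eqref{plan:hardy} to bootstrap decay of $w=u_r$ near the origin. The strategy is to choose a one-parameter family of test functions $\zeta$ tailored to extract the optimal power: taking $\zeta=\zeta(r)$ to behave like $r^{\alpha}$ for suitable $\alpha$ converts \eqref{plan:hardy} into an integral inequality controlling $\int r^{2\alpha+n-3}w^2\,dr$, and the borderline exponent is dictated by the quadratic $\alpha^2+(n-2)\alpha+(n-1)\le 0$, whose roots are $-\tfrac{n-2}{2}\pm\sqrt{n-1}$. This is the source of the $\sqrt{n-1}$ in $p_0$ and $p_1$. In dimensions $n\le 9$ the admissible range of $\alpha$ reaches past the threshold needed to place $u_r$ in a space that embeds into $L^\infty$, giving the pointwise bound; in dimensions $n\ge 10$ one obtains only the optimal $L^p$ control of $u_r$ for $p<p_1$, and integrating $u(r)=\int_r^R(-u_s)\,ds$ upgrades this to the $L^p$ bound \eqref{Lp:estimate} for $u$ with $p<p_0$ via a one-dimensional Hardy/embedding argument. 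The $W^{1,p}$ estimate \eqref{W1p:estimate} then follows directly, since $|\nabla_g u|=|u_r|$ and we have already estimated $\int |u_r|^p\psi^{n-1}\,dr$.

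Throughout, the control is reduced to the $L^1$ norm by a normalization argument: since $u$ is positive, radial, and decreasing, $\|u\|_{L^1(\mathcal{B}_R)}$ dominates weighted integrals of $u$ over any fixed inner ball, so the constants in the estimates near the boundary $r=R$ are harmless and can be absorbed using standard interior elliptic estimates (the operator is uniformly elliptic there by \eqref{hyppsi} and the positivity of $\psi$ on $(0,R]$). The main obstacle I anticipate is making the weighted Hardy inequality \eqref{plan:hardy} genuinely sharp with the correct constant $n-1$ uniformly up to $r=0$: the weight $(n-1)\frac{(\psi')^2-\psi\psi''}{\psi^2}$ is only \emph{asymptotically} equal to $\frac{n-1}{r^2}$, and controlling the lower-order discrepancy — which depends on the full profile of $\psi$ and not merely its $2$-jet at $0$ — is what forces the constants $C_{n,\psi,p}$ to depend on $\psi$ and is the delicate part of transplanting the Euclidean argument to the Riemannian model. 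A secondary technical point is justifying the admissibility of the singular test functions $\xi=u_r\zeta$ in \eqref{semistability}, which requires an approximation argument together with the regularity $u\in C^2(\mathcal{B}_R)$ and the behavior of $u_r$ as $r\to0^+$.
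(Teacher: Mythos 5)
Your overall route is the same as the paper's: the displayed Hardy-type inequality you derive from testing \eqref{semistability} with $u_r\zeta$ is exactly the paper's key inequality \eqref{etapsi} after the substitution $\zeta=\psi\eta$; the cutoffs $\zeta\sim\psi^{1-\alpha}$ then give the weighted estimate $\int_0^\delta u_r^2\psi^{n-1-2\alpha}\,dr\le C_{n,\alpha,\psi}\|u\|_{L^1(\mathcal{B}_R)}^2$ for $1\le\alpha<1+\sqrt{n-1}$ (Lemma~\ref{lemma:key}), with the $\psi\psi''$ discrepancy you worry about absorbed by Young's inequality on a small interval where $\psi'>0$; and part $(a)$ together with the $L^p$ bound for $u$ follow from Cauchy--Schwarz applied to $u(t)\le u(\delta)+\int_t^\delta(-u_r)\,dr$, as you indicate. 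Two slips are harmless: the linearized equation for $w=u_r$ should read $-\Delta_gw+(n-1)\frac{(\psi')^2-\psi\psi''}{\psi^2}w=f'(u)w$ (your potential carries the opposite sign, though the inequality you then state has the correct one), and the quadratic $\alpha^2+(n-2)\alpha+(n-1)\le0$ does not have the roots $-\frac{n-2}{2}\pm\sqrt{n-1}$ you quote (the relevant constraint is $\bigl(\alpha+\frac{n-2}{2}\bigr)^2\le n-1$ in your normalization); the exponents in \eqref{crit:exp} that you extract are nevertheless correct.

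The genuine gap is your claim that the bootstrap yields ``optimal $L^p$ control of $u_r$ for $p<p_1$'', on which both your derivation of \eqref{Lp:estimate} and the whole of \eqref{W1p:estimate} rest. The family of inequalities you obtain controls only \emph{weighted $L^2$} norms of $u_r$, and since $p_1>2$ for every $n\ge10$ (for instance $p_1=10$ when $n=10$), no H\"older-type manipulation can upgrade an $L^2$-type bound to an $L^p$ bound with $p>2$. The missing ingredient is a \emph{pointwise} bound on $-u_r$, obtained by using the equation once more: positivity of $f$ gives $u_{rr}\le-(n-1)\frac{\psi'}{\psi}u_r$, and integrating this from $t$ to a mean-value point $\rho\in(\delta,2\delta)$ and applying Cauchy--Schwarz against $\int u_r^2\psi^{n-1-2\alpha}\,dr$ yields
\[
-u_r(t)\le C_{n,\alpha,\psi}\|u\|_{L^1(\mathcal{B}_R)}\Bigl(1+\Bigl(\int_t^{2\delta}\bigl(\tfrac{\psi'}{\psi}\bigr)^2\psi^{-n+1+2\alpha}\,dr\Bigr)^{1/2}\Bigr),
\]
that is, $-u_r(t)\lesssim t^{(2\alpha-n)/2}$; only after this does raising to the $p$-th power against the weight $\psi^{n-1}$ produce the threshold $p<\frac{2n}{n-2\alpha}$ and hence $p<p_1$. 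A minor further point: near $r=R$ the bound on $\int_\delta^R|u_r|^p$ comes from the one-dimensional embedding $H^1(\delta,R)\hookrightarrow L^\infty$ together with \eqref{udelta}, not from interior elliptic estimates, but that is bookkeeping.
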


\begin{Remark}
Note that the denominator of the exponent $p_0$ in \eqref{crit:exp} is positive for 
$n>10$, while it vanishes for $n=10$. This exponent has to be understood as infinity for $n=10$. 

In dimensions $n\leq 9$, every solution, \textit{a priori} possibly singular, which is limit of semistable 
classical solutions is bounded by Theorem~\ref{thm1}~(i), and thus it is in fact a classical solution. 
In this sense, Theorem~\ref{thm1} may be regarded as a result on removable singularities. 
\end{Remark}

Cabr\'e and Capella \cite{CC05} proved Theorem~\ref{thm1} in the Euclidean 
case: $\psi(r)=r$. The proof of our main theorem, as in \cite{CC05}, relies 
essentially on the following key estimate
\begin{equation}\label{keykey}
\int_{0}^{\delta} u_{r}^{2} \psi^{n-1-2\alpha}\,dr 
\leq
C_{n,\alpha,\psi} \| u \|_{L^1 (\mathcal{B}_R)}^2
\end{equation}
for some $\delta\in(0,R)$ and some range of explicit $\alpha$ (see 
Lemma~\ref{lemma:key} below). 
This estimate is obtained by using the radial symmetry of the solution  
and by choosing $\xi=|u_r|\eta$ as a new test function in the 
semistability condition \eqref{semistability}. With this choice, we 
have to be careful in the computations due to the appearance of the
first and second derivatives of $\psi$ (which in the Euclidean case
are identically $1$ and $0$, respectively). As we will see, the general 
assumptions \eqref{hyppsi} on $\psi$ will be enough to prove \eqref{keykey}.

Note that our result applies to the important case of \textit{space forms}, 
\textit{i.e.}, the unique complete and simply connected Riemannian manifold 
$M$ of constant sectional curvature $K_\psi$ given by
\begin{itemize}
	\item the hyperbolic space $\mathbb{H}^n$: $\psi (r) = \sinh r$ and $K_\psi=-1$;
	\item the Euclidean space $\mathbb{R}^n$: $\psi (r) = r$ and $K_\psi=0$;
	\item the elliptic space $\mathbb{S}^n$: $\psi (r) = \sin r$ and $K_\psi=1$.
\end{itemize}

In Theorems~\ref{thm:hyper} and \ref{thm:ellip} below we present explicit extremal 
solutions (which are limit of classical semistable solutions) for some exponential 
and power nonlinearities. These explicit solutions, as in the flat case, show the 
sharpness of the $L^\infty$, $L^p$, and $W^{1,p}$ estimates of Theorem~\ref{thm1} 
in geodesic balls $\mathcal{B}_R$ of the above space forms.  

As main application of Theorem~\ref{thm1}, we consider the following problem
\begin{equation}\label{probla}
\left\{
\begin{array}{rcll}
-\Delta_g u&=&\lambda f(u)& \textrm{ in }\Omega,\\
u& > & 0 & \textrm{ in }\Omega,\\
u& = & 0 & \textrm{ on }\partial \Omega,
\end{array}
\right.
\end{equation}
where $\Omega$ is a smooth bounded domain in $M$, $\lambda>0$, and $f$ is an increasing 
$C^1$ function satisfying $f(0)>0$ and
\begin{equation}\label{superlinear}
\lim_{t\rightarrow+\infty}\frac{f(t)}{t}=+\infty.
\end{equation}

The study of the above nonlinear eigenvalue problem requires to extend 
to the general case of Riemannian models the classical results of 
Crandall and Rabinowitz~\cite{CranRa75} and Brezis \textit{et al.}~\cite{BreCaMarRa96} 
for the Euclidean setting (see also Proposition~5.1 in \cite{CC05}). 
More specifically, since the first eigenvalue of 
$-\Delta_g$ on $\Omega$ is positive (as well as the corresponding eigenfunction) 
and we have a comparison principle for $-\Delta_g$ (since it is uniformly elliptic), 
it is standard to prove that there exists a parameter value $\lambda^*\in(0,+\infty)$ 
such that: if $0<\lambda <\lambda^*$ then \eqref{probla} admits a minimal solution 
$u_\lambda\in C^2 (\overline{\Omega})$, 
while for $\lambda>\lambda^*$ problem \eqref{probla} does not admit any classical solution. 
Here minimal means smaller than any other supersolution of the 
problem. Moreover, we also have that for every $0< \lambda < \lambda^*$ the minimal 
solution $u_\lambda$ is semistable in the sense of \eqref{semistability}. 
These assertions can be obtained as in Proposition~5.1~$(a)$-$(b)$ of \cite{CC05}.

Moreover, the increasing limit of minimal solutions
\begin{equation}\label{extremal}
u^*:=\lim_{\lambda\uparrow \lambda^*}u_\lambda,
\end{equation}
which is well defined by the pointwise increasing property of $u_\lambda$ with 
respect to $\lambda$, becomes a \textit{weak} solution of \eqref{probla} 
for $\lambda=\lambda^*$ in the following sense: $u^*\in L^1(\mathcal{B}_R)$, 
$f(u^*) (R-r) \in L^1(\mathcal{B}_R)$, and 
\begin{equation}\label{veryweak}
-\int_{\mathcal{B}_R} u \Delta_{g} \xi \, dv_{g}= 
\lambda\int_{\mathcal{B}_R} f(u) \xi \, dv_{g} \quad
\textrm{for all } \, \xi \in C^1_{0} (\mathcal{B}_R).
\end{equation}
This solution $u^*$ is called the \emph{extremal solution} of \eqref{probla} for $\lambda=\lambda^*$.
This statement follows as in Proposition~5.1~$(c)$ of \cite{CC05}. 

Applying Theorem~\ref{thm1}~(a) or (b) (depending on the dimension $n$) 
to minimal solutions $u_\lambda$ and letting $\lambda\uparrow\lambda^*$ it is 
straightforward to see that $u^*$ enjoys the same regularity properties 
as the ones stated in Theorem~\ref{thm1}: 

\begin{Corollary}\label{cor1}
Assume that $\psi\in C^2([0,R])$ is positive in $(0,R]$ and satisfies \eqref{hyppsi}.
Let $f$ be a $C^1$ positive and increasing function satisfying 
\eqref{superlinear}. 
Let $u^*\in L^1(\mathcal{B}_R)$ be the extremal solution of 
\eqref{probla} and $p_0$, $p_1$ the exponents defined in \eqref{crit:exp}. 
Then the following assertions hold:
\begin{itemize} 
\item[$(i)$] If $n\leq 9$ then $u^*\in L^\infty(\mathcal{B}_R)$.
\item[$(ii)$] If $n\geq 10$ then $u^*\in L^p(\mathcal{B}_R)\cap W^{1,q}(\mathcal{B}_R)$ for all 
$p<p_0$ and $q<p_1$.
\end{itemize}
\end{Corollary}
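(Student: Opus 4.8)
The plan is to deduce Corollary~\ref{cor1} directly from Theorem~\ref{thm1} by a limiting argument along the family of minimal solutions $u_\lambda$. The key structural facts, all established in the excerpt, are that for each $\lambda\in(0,\lambda^*)$ the minimal solution $u_\lambda\in C^2(\overline{\Omega})$ is a semistable classical solution of \eqref{probla} (hence of \eqref{probintro} with the nonlinearity $\lambda f$, which is again locally Lipschitz and positive because $f(0)>0$), that the family is pointwise increasing in $\lambda$, and that $u^*=\lim_{\lambda\uparrow\lambda^*}u_\lambda$ with $u^*\in L^1(\mathcal{B}_R)$. I would also first reduce to the radial setting: by Theorem~\ref{radsemistab} each $u_\lambda$ is radially symmetric and decreasing on the geodesic ball, so Theorem~\ref{thm1} applies verbatim to each $u_\lambda$.

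For part $(i)$, when $n\le 9$, I would apply estimate \eqref{Linfty:estimate} of Theorem~\ref{thm1}~(a) to each $u_\lambda$, obtaining
\begin{equation}\label{cor:unif}
\Vert u_\lambda\Vert_{L^\infty(\mathcal{B}_R)}\le C_{n,\psi}\,\Vert u_\lambda\Vert_{L^1(\mathcal{B}_R)},
\end{equation}
where the constant $C_{n,\psi}$ is independent of $\lambda$. Since $0<u_\lambda\uparrow u^*$ pointwise and $u^*\in L^1(\mathcal{B}_R)$, monotone convergence gives $\Vert u_\lambda\Vert_{L^1(\mathcal{B}_R)}\uparrow\Vert u^*\Vert_{L^1(\mathcal{B}_R)}<\infty$, so the right-hand side of \eqref{cor:unif} is bounded uniformly in $\lambda$. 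Letting $\lambda\uparrow\lambda^*$ and using again that $u_\lambda\uparrow u^*$ pointwise yields $\Vert u^*\Vert_{L^\infty(\mathcal{B}_R)}\le C_{n,\psi}\,\Vert u^*\Vert_{L^1(\mathcal{B}_R)}<\infty$, which proves $u^*\in L^\infty(\mathcal{B}_R)$.

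For part $(ii)$, when $n\ge 10$, the argument is identical in spirit. Fixing $p<p_0$ and $q<p_1$, I would apply \eqref{Lp:estimate} and \eqref{W1p:estimate} to each $u_\lambda$ to obtain $\Vert u_\lambda\Vert_{L^p(\mathcal{B}_R)}\le C_{n,\psi,p}\Vert u_\lambda\Vert_{L^1(\mathcal{B}_R)}$ and $\Vert u_\lambda\Vert_{W^{1,q}(\mathcal{B}_R)}\le \overline{C}_{n,\psi,q}\Vert u_\lambda\Vert_{L^1(\mathcal{B}_R)}$, with constants independent of $\lambda$. As before the right-hand sides stay bounded as $\lambda\uparrow\lambda^*$, so the $u_\lambda$ are uniformly bounded in $L^p$ and in $W^{1,q}$. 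Reflexivity of these spaces (for $1<p,q<\infty$) yields weak convergence of a subsequence, whose limit must coincide with the pointwise limit $u^*$; lower semicontinuity of the norm under weak convergence then gives $\Vert u^*\Vert_{L^p}\le C_{n,\psi,p}\Vert u^*\Vert_{L^1}$ and $\Vert u^*\Vert_{W^{1,q}}\le\overline{C}_{n,\psi,q}\Vert u^*\Vert_{L^1}$, hence $u^*\in L^p(\mathcal{B}_R)\cap W^{1,q}(\mathcal{B}_R)$.

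The one point requiring genuine care—and the main obstacle—is the passage to the limit in the weak/gradient sense for part $(ii)$: I must ensure that the weak limit of $u_\lambda$ in $W^{1,q}$ is actually $u^*$. This is where I would invoke the pointwise monotone convergence $u_\lambda\uparrow u^*$ together with the uniform bounds to identify the two limits, and where the definition of $u^*$ as a weak solution \eqref{veryweak} guarantees consistency. The $L^\infty$ case $(i)$ is cleaner since no weak-compactness argument is needed: the pointwise monotone limit passes through the uniform sup bound directly. In all cases the crucial feature, inherited from Theorem~\ref{thm1}, is that the constants depend only on $n$, $\psi$, and the integrability exponent, and in particular are independent of $\lambda$ and of the nonlinearity, which is exactly what makes the limit $\lambda\uparrow\lambda^*$ harmless.
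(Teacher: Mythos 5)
Your proposal is correct and follows exactly the paper's route: the paper's own proof is the one-line observation that one applies Theorem~\ref{thm1} to the semistable classical minimal solutions $u_\lambda$ and lets $\lambda\uparrow\lambda^*$, using $u^*\in L^1(\mathcal{B}_R)$ to bound the right-hand sides uniformly. Your elaboration of the limit passage (monotone convergence for the $L^\infty$ and $L^p$ bounds, weak compactness and lower semicontinuity for the $W^{1,q}$ bound) is a sound filling-in of the details the paper leaves implicit.
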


As second main result, we obtain the extremal solution for some exponential and power nonlinearities. 
More precisely, given 
\begin{equation}\label{kpsi}
K_\psi
:=
\left\{
\begin{array}{lll}
-1&\textrm{if}&\psi=\sinh,
\\
0&\textrm{if}&\psi={\rm Id},
\\
1&\textrm{if}&\psi=\sin,
\end{array}
\right.
\end{equation}
we consider the following exponential and power nonlinearities:
\begin{equation}\label{exp:nonlinearity}
f_{\rm e}(u) = \frac{e^{u}}{\psi(R)^2}-\frac{n-1}{n-2}K_\psi
\end{equation}
and
\begin{equation}\label{power:nonlinearity}
f_{\rm p}(u) = (u+\psi(R)^{-\frac{2}{m-1}})\left((u+\psi(R)^{-\frac{2}{m-1}})^{m-1}
-\frac{(m-1)n-(m+1)}{(m-1)n-2m}K_\psi\right),
\end{equation}
where $m>1$.

Note that for $\psi(r)=r$ (the Euclidean case) and $R=1$ (the unit ball), we 
recover the classical nonlinearities $e^u$ and $(1+u)^m$ studied in detail by Joseph 
and Lundgren~\cite{JoLund73}, Crandall and Rabinowitz~\cite{CranRa75}, Mignot and 
Puel~\cite{MP80}, and Brezis and V\'azquez~\cite{BreVaz97}. For these nonlinearities
the extremal parameter and the extremal solution of \eqref{probla} are as follows:
\begin{itemize}
\item If $f(u)=e^u$ and $n\geq 10$ then $\lambda^*=2(n-2)$ and $u^*(r)=\log(1/r^2)$.
\item If $f(u)=(1+u)^m$ and 
\begin{equation}\label{dim:powers}
n\geq N(m):=2+4\frac{m}{m-1}+4\sqrt{\frac{m}{m-1}},
\end{equation}
then $\lambda^*=\frac{2}{m-1}\left(n-\frac{2m}{m-1}\right)$ and 
$u^*(r)=r^{-\frac{2}{m-1}}-1$.
\end{itemize}

We extend this result to the hyperbolic and the elliptic spaces. 
In the hyperbolic space we find the extremal parameter and the extremal solution 
of~\eqref{probla} for both nonlinearities (the ones defined in \eqref{exp:nonlinearity} and 
\eqref{power:nonlinearity}) in any geodesic ball.
\begin{Theorem}\label{thm:hyper}
Assume $\psi=\sinh$. Let $f_{\rm e}$ and $f_{\rm p}$ be the nonlinearities defined 
in \eqref{exp:nonlinearity} and \eqref{power:nonlinearity}, respectively, and let $N(m)$ be
defined in \eqref{dim:powers}.
The following assertions hold:

\begin{itemize}
\item[$(i)$] Let $f=f_{\rm e}$. If $n\geq 10$, 
then
$$
\lambda^*=2(n-2)\quad \textrm{and}\quad u^*(r)=-2\log \left(\frac{\sinh(r)}{\sinh(R)}\right).
$$ 

\item[$(ii)$] Let  $f=f_{\rm p}$ with $m>1$. If $n\geq N(m)$
then
$$
\lambda^*=\frac{2}{m-1}\left(n-\frac{2m}{m-1}\right)
\quad\textrm{and}\quad
u^*(r)=\sinh(r)^{-\frac{2}{m-1}}-\sinh(R)^{-\frac{2}{m-1}}.
$$
\end{itemize}
\end{Theorem}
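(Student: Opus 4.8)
The plan is to treat both parts by the same three-stage scheme: verify that the proposed pair $(\lambda^*,u^*)$ is an explicit (singular) weak solution, prove that $u^*$ is semistable precisely in the stated range of dimensions, and then invoke the characterization of the extremal solution as the unique semistable weak solution. Throughout I use that by Theorem~\ref{radsemistab} all the relevant solutions are radial, so that on radial functions the operator reads $\Delta_g v = v_{rr} + (n-1)\coth(r)\, v_r$, since $\psi'/\psi = \coth r$ for $\psi = \sinh$.

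First I would substitute the candidate $u^*$ into $-\Delta_g u^* = \lambda f(u^*)$ and match coefficients. For part $(i)$, writing $e^{u^*} = (\sinh R/\sinh r)^2$ one finds $f_{\rm e}'(u^*) = \sinh(r)^{-2}$ and, using $\cosh^2 = 1+\sinh^2$, that $-\Delta_g u^* = 2(n-2)\,f_{\rm e}(u^*)$; this both identifies $\lambda^* = 2(n-2)$ and confirms $u^*(R)=0$. For part $(ii)$, setting $\beta = 2/(m-1)$ and $w := u^* + \sinh(R)^{-\beta} = \sinh(r)^{-\beta}$, a direct computation gives $-\Delta_g u^* = \beta(n-\beta-2)\sinh^{-\beta-2} + \beta(n-\beta-1)\sinh^{-\beta}$; comparing this with $\lambda f_{\rm p}(u^*) = \lambda(w^{m-1}+c)\,w$, where $c$ is the constant in the bracket of \eqref{power:nonlinearity}, forces \emph{both} $\lambda^* = \beta(n-\beta-2) = \frac{2}{m-1}(n-\frac{2m}{m-1})$ and exactly the constant appearing in \eqref{power:nonlinearity}. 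A short integrability check near the pole (using $u^*\sim -2\log r$, resp. $u^*\sim r^{-\beta}$, and $f(u^*)\sim r^{-2}$, resp. $r^{-\beta-2}$) then shows $u^*\in L^1(\mathcal{B}_R)$ and $f(u^*)(R-r)\in L^1(\mathcal{B}_R)$, and that $u^*$ satisfies \eqref{veryweak}.

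The heart of the argument is semistability. Decomposing any $\xi\in C^1_0(\mathcal{B}_R)$ into spherical harmonics and noting that each angular mode contributes a nonnegative term $\mu_k\,\xi_k^2\,\psi^{n-3}$ to the quadratic form, it suffices to verify \eqref{semistability} against radial test functions, i.e.
$$
\int_0^R (\xi')^2 \sinh(r)^{n-1}\,dr \;\geq\; \lambda^* \int_0^R f'(u^*)\,\xi^2\,\sinh(r)^{n-1}\,dr .
$$
In case $(i)$ the right-hand side equals $2(n-2)\int_0^R \xi^2\sinh(r)^{n-3}\,dr$, while in case $(ii)$, since $f_{\rm p}'(u^*) = m\sinh^{-2} + c$, it equals $\lambda^*\int_0^R (m\,\sinh^{n-3} + c\,\sinh^{n-1})\,\xi^2\,dr$. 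Thus semistability reduces to a weighted Hardy inequality whose leading singular weight is $\sinh^{n-3}$: one must dominate the coefficient $2(n-2)$ (resp. $\lambda^* m = \frac{2m}{m-1}(n-\frac{2m}{m-1})$) by the sharp Hardy constant $(n-2)^2/4$ attached to that weight. Solving $(n-2)^2/4 \geq 2(n-2)$ yields $n\geq 10$, and solving $(n-2)^2/4 \geq \lambda^* m$ yields exactly $n\geq N(m)$ from \eqref{dim:powers}; the remaining bounded potential $c\,\sinh^{n-1}$ is absorbed by the lower-order improvement terms. This is where the improved weighted Hardy inequality for radial functions enters, and it is the step I expect to be the main obstacle: near the pole $\sinh r\sim r$ so the leading constant is the Euclidean one, and one must prove the inequality up to and including the borderline dimension with a genuine positive surplus coming from the curvature, and check that this surplus controls the extra lower-order weight.

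Finally, once $u^*$ is shown to be a semistable weak solution at the computed $\lambda^*$, I would conclude by the characterization of the extremal solution: in this setting (as in Proposition~5.1 of \cite{CC05} and the Brezis \textit{et al.} theory) a semistable weak solution of \eqref{probla} is necessarily the extremal solution and its parameter is necessarily the extremal one. Hence the computed value is $\lambda^*$ and $u^*$ is the extremal solution, giving both parts. The only additional care needed is to ensure that the general facts about the minimal branch and the uniqueness of semistable weak solutions transfer to Riemannian models, which is guaranteed by the uniform ellipticity of $-\Delta_g$ and the positivity of its first eigenvalue on geodesic balls.
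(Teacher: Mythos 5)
Your proposal is correct and follows essentially the same route as the paper: exhibit the explicit singular $H^1_0$ candidate, verify its semistability via the improved weighted Hardy inequality of Proposition~\ref{hardyprop} (with the thresholds $n\geq 10$ and $n\geq N(m)$ arising from $(n-2)^2/4\geq 2(n-2)$ and $(n-2)^2/4\geq \lambda^* m$ exactly as you indicate), and conclude with the Brezis--V\'azquez uniqueness result for semistable energy solutions (Proposition~\ref{uniqueness}), where the singularity of the candidate is what forces $\lambda^\#=\lambda^*$. The two points you flag as needing care are precisely what the paper supplies: the constant part of the potential in case $(ii)$ is absorbed by checking $H_{n,\sinh}\geq \frac{2}{(m-1)^2}\bigl((m-1)n-(m+1)\bigr)$, which follows from $m>(n+2)/(n-2)$, and the reduction to radial test functions is implicit there via the spherical-harmonics argument you make explicit.
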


Instead, in the elliptic space we find the extremal parameter and the extremal solution only in 
sufficiently small balls.

\begin{Theorem}\label{thm:ellip}
Assume $\psi=\sin$. Let $f_{\rm e}$ and $f_{\rm p}$ be the nonlinearities defined 
in \eqref{exp:nonlinearity} and \eqref{power:nonlinearity}, respectively, and let $N(m)$ be
defined in \eqref{dim:powers}.
Let 
\begin{equation}\label{R0}
R_0:=\sup\{s\in(0,\pi/2): \frac{\sin^2 s}{(1-\cos s)^2}>n(n-2)\}.
\end{equation}

The following assertions hold:

\begin{itemize}
\item[$(i)$] \hspace{-0.1cm}Let $f=f_{\rm e}$ and  $R_{\rm e}:=\arcsin\left(\sqrt{\frac{n-2}{n-1}}\right)\in(0,\pi/2)$.
If $n\geq 10$ and $R<\min\{R_0,R_{\rm e}\}$, then
$$
\lambda^*=2(n-2)\quad \textrm{and}\quad u^*(r)=-2\log \left(\frac{\sin(r)}{\sin(R)}\right).
$$ 

\item[$(ii)$] \hspace{-0.1cm}Let $f=f_{\rm p}$ with $m>1$ and $R_{\rm p}
:=\arcsin\left(\sqrt{\frac{n-2}{n}}\right)\in(0,\pi/2)$. 
If $n\geq N(m)$ and $R<\min\{R_0,R_{\rm p}\}$ then
$$
\lambda^*=\frac{2}{m-1}\left(n-\frac{2m}{m-1}\right)
\quad\textrm{and}\quad
u^*(r)=\sin(r)^{-\frac{2}{m-1}}-\sin(R)^{-\frac{2}{m-1}}.
$$
\end{itemize}
\end{Theorem}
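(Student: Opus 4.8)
The plan is to identify the claimed pair $(\lambda^*,u^*)$ as an \emph{unbounded semistable weak solution} of \eqref{probla} and then invoke the theory of extremal solutions recalled above (as in \cite{CC05}): since no semistable weak solution of \eqref{probla} exists for $\lambda>\lambda^*$ and, for $\lambda<\lambda^*$, the only semistable solution is the bounded minimal one $u_\lambda$, an unbounded semistable weak solution can occur only at the extremal parameter and must coincide with the extremal solution. Thus I would reduce the theorem, in each of the cases $f=f_{\rm e}$ and $f=f_{\rm p}$, to three points.

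First, that $u^*$ is a weak solution of \eqref{probla} for the stated $\lambda^*$. Writing $-\Delta_g u=-\big(u_{rr}+(n-1)\tfrac{\cos r}{\sin r}\,u_r\big)$ for radial $u$, a direct differentiation of the explicit $u^*$ gives $-\Delta_g u^*=\lambda^* f(u^*)$; the computation collapses because $e^{u^*}=\sin^2 R/\sin^2 r$ in case $(i)$ and $u^*+\sin(R)^{-2/(m-1)}=\sin(r)^{-2/(m-1)}$ in case $(ii)$, so that the curvature and reaction terms cancel exactly. One checks $u^*(R)=0$ and, since $R<\pi/2$, that $u^*>0$ in $\mathcal{B}_R$; the thresholds $R<R_{\rm e}$ (resp. $R<R_{\rm p}$) are what make $f(u^*)>0$ throughout $(0,R)$, so that the nonlinearity is admissible. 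Finally, because $n$ is large the singularities at $r=0$ are integrable: $u^*\in L^1$, $f(u^*)(R-r)\in L^1$, and the flux $\sin^{n-1}r\,u^*_r\to0$ as $r\to0$, so no mass concentrates at the pole and \eqref{veryweak} holds.

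Second, and this is the heart of the matter, that $u^*$ is semistable. Expanding any $\xi\in C^1_0(\mathcal{B}_R)$ in spherical harmonics and using that the eigenvalues of $-\Delta_{\mathbb S^{n-1}}$ are nonnegative, one sees that the quadratic form in \eqref{semistability} is smallest on the radial mode, so it suffices to establish the one-dimensional inequality
\[
\int_0^R \xi_r^2\,\sin^{n-1}r\,dr\ \ge\ \lambda^*\!\int_0^R f'(u^*)\,\xi^2\,\sin^{n-1}r\,dr,\qquad \xi(R)=0,
\]
where $\lambda^* f_{\rm e}'(u^*)=2(n-2)/\sin^2 r$ and $\lambda^* f_{\rm p}'(u^*)=\beta m(n-m\beta)/\sin^2 r-\beta(n-m\beta)d$, with $\beta=2/(m-1)$ and $d=\frac{(m-1)n-(m+1)}{(m-1)n-2m}$. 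This is the improved weighted Hardy inequality for radial functions. I would prove it by the ground-state substitution $\xi=(\sin r)^{-(n-2)/2}v$, after which the left-hand side becomes $\int_0^R v_r^2\sin r\,dr$ and the $\sin^{-2}r$ potential is absorbed, leaving a remainder that is nonnegative exactly when $n\ge10$ (resp. $n\ge N(m)$, by \eqref{dim:powers}) and reducing the claim to the auxiliary weighted inequality
\[
\int_0^R v_r^2\,\sin r\,dr\ \ge\ \frac{n(n-2)}{4}\int_0^R v^2\,\sin r\,dr .
\]
The hypothesis $R<R_0$, i.e. $\cot^2(R/2)>n(n-2)$, is what secures this last inequality.

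The step I expect to be the main obstacle is this verification of semistability and, in particular, the sharp range of $R$. In the flat case the function $r^{-(n-2)/2}$ is an \emph{exact} zero-energy solution of the linearized radial operator, whereas on $\mathbb S^n$ the positive curvature produces an extra term of unfavorable sign: no power of $\sin r$ is an exact ground state, the optimal Hardy constant genuinely decreases as $R$ grows, and controlling this loss is what forces the smallness $R<R_0$ and the thresholds $R_{\rm e},R_{\rm p}$. (This is in sharp contrast with the hyperbolic case of Theorem~\ref{thm:hyper}, where the analogous curvature term has the favorable sign and no restriction on $R$ is needed.) Once semistability is in hand, the third point is immediate: $u^*$ is unbounded and hence not a classical minimal solution, so by the characterization recalled above it must be the extremal solution and $\lambda^*$ the extremal parameter, which completes the proof.
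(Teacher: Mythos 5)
Your proposal follows essentially the same route as the paper: verify that the explicit singular pair is a weak solution, prove its semistability via the ground-state substitution $\xi=\psi^{-(n-2)/2}v$ together with the weighted Poincar\'e inequality with constant $\tfrac14\cot^2(R/2)>\tfrac{n(n-2)}{4}$ (which is exactly the paper's improved Hardy inequality, Proposition~\ref{hardyprop}, with its constant $H_{n,\sin}$ and the same role for $R<R_0$), and conclude by the Brezis--V\'azquez uniqueness of semistable $H^1_0$ solutions (Proposition~\ref{uniqueness}). The only cosmetic difference is that you make the reduction to radial test functions explicit via spherical harmonics, which the paper leaves implicit.
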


\begin{Remark}
(i) These examples show the sharpness of our regularity results for any 
geodesic ball in the hyperbolic space and for geodesic balls of small enough radius in 
the elliptic space. 
For the exponential nonlinearity we obtain that the extremal solution 
$u^*(r)=-2\log \left(\psi(r)/\psi(R)\right)$ ---which is limit of semistable 
classical solutions--- is unbounded at the origin if $n \geq 10$. 
This shows the optimality of Theorem~\ref{thm1}~$(a)$. 
Instead, for the power nonlinearity we obtain that the extremal solution
$u^*(r)=\psi(r)^{-\frac{2}{m-1}}-\psi(R)^{-\frac{2}{m-1}}$ belongs 
exactly to the $L^p$ and $W^{1,p}$ spaces stated in Theorem~\ref{thm1}~$(b)$. This shows 
the sharpness of the exponents $p_0$ and $p_1$ defined in \eqref{crit:exp}.

(ii) In Theorem~\ref{thm:ellip}~(i) we make the assumption $R<\min\{R_0,R_{\rm e}\}$. We assume 
$R<R_{\rm e}$ in order to ensure that the exponential nonlinearity defined 
in \eqref{exp:nonlinearity} is positive. Instead, we assume $R<R_0$ in order to have a
Hardy-type inequality (see Proposition~\ref{hardyprop} below). 
The assumptions on $R$ in Theorem~\ref{thm:ellip}~(ii) are set exactly for the same reasons.
\end{Remark}

To prove Theorems~\ref{thm:hyper} and \ref{thm:ellip} we proceed as in \cite{BreVaz97}.
That is, we use the uniqueness of semistable solutions in the energy class $H^1_0(\mathcal{B}_R)$ 
(see Proposition~\ref{uniqueness} below) and the following improved Hardy inequality.

\begin{Proposition}[Improved weighted Hardy inequality]\label{hardyprop}
Assume $n\geq 3$. Let $\psi$ either $\sinh$ or $\sin$, and $K_\psi$ and $R_0$ be defined in 
\eqref{kpsi} and \eqref{R0}, respectively. 
The following inequality holds: 
\begin{equation}\label{improved:hardy}
\int_{0}^{R} \psi^{n-1} \xi_{r}^{2} \, dr 
\geq
\frac{(n-2)^2}{4} \int_{0}^{R}  \psi^{n-1} \frac{\xi^{2}}{\psi^2}   \, dr
+
H_{n,\psi}\int_{0}^{R} \psi^{n-1}\xi^2 \, dr
\end{equation} 
for all radial $\xi \in C^1_0(\mathcal{B}_R)$, where 
\begin{equation}\label{Hardy:ctant}
H_{n,\psi}=\frac{1}{4}\left((\sup_{(0,R)}(\phi/\psi))^{-2}-n(n-2)K_\psi\right)
\end{equation}
and $\phi(r):=\int_0^r\psi(s)\,ds$ for all $r\in(0,R)$. 

If in addition $R<R_0$ when $\psi=\sin$, then $H_{n,\psi}>0$. In particular,
\begin{equation}\label{hardy}
\int_{0}^{R} \psi^{n-1} \xi_{r}^{2} \, dr 
\geq
\frac{(n-2)^2}{4} \int_{0}^{R}  \psi^{n-1} \frac{\xi^{2}}{\psi^2}   \, dr
\quad\textrm{for all radial }\xi \in C^1_0(\mathcal{B}_R).
\end{equation}
\end{Proposition}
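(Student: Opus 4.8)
The plan is to run the classical ``ground state substitution'' so as to peel off the sharp Hardy term $\frac{(n-2)^2}{4}\int_0^R\psi^{n-1}\xi^2/\psi^2$ together with a constant-potential term, and then to recover the additional constant $H_{n,\psi}$ from the leftover nonnegative term by means of an elementary one-dimensional weighted Poincar\'e inequality. Throughout I would exploit the two identities satisfied by both admissible profiles, namely $\psi_r^2 = 1 - K_\psi\psi^2$ and $\psi_{rr}=-K_\psi\psi$, which let one treat $\psi=\sinh$ and $\psi=\sin$ in one stroke.

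First I would set $w:=\psi^{n-1}$ and $\varphi:=\psi^{-(n-2)/2}$ and write $\xi=\varphi v$, i.e. $v=\psi^{(n-2)/2}\xi$. Expanding $\xi_r=\varphi_r v+\varphi v_r$ and integrating by parts the cross term on $(0,R)$ produces the identity
\[
\int_0^R w\,\xi_r^2\,dr = \int_0^R w\varphi^2\, v_r^2\,dr - \int_0^R \frac{(w\varphi_r)_r}{\varphi}\,\xi^2\,dr,
\]
provided the boundary contribution $[\,w\varphi\varphi_r v^2\,]_0^R = -\tfrac{n-2}{2}[\psi^{n-2}\psi_r\xi^2]_0^R$ vanishes. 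A direct computation of the potential (using $2a=n-2$ with $a=(n-2)/2$) gives $-\frac{(w\varphi_r)_r}{\varphi} = w\big[\frac{(n-2)^2}{4}\frac{\psi_r^2}{\psi^2} + \frac{n-2}{2}\frac{\psi_{rr}}{\psi}\big]$, and substituting $\psi_r^2=1-K_\psi\psi^2$ and $\psi_{rr}=-K_\psi\psi$ collapses this to $w\big[\frac{(n-2)^2}{4}\frac{1}{\psi^2}-\frac{n(n-2)}{4}K_\psi\big]$. Since $w\varphi^2=\psi$ and $w\xi^2=\psi v^2$, the identity reads
\[
\int_0^R \psi^{n-1}\xi_r^2\,dr = \int_0^R \psi\, v_r^2\,dr + \frac{(n-2)^2}{4}\int_0^R\psi^{n-1}\frac{\xi^2}{\psi^2}\,dr - \frac{n(n-2)}{4}K_\psi\int_0^R\psi^{n-1}\xi^2\,dr.
\]

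It then remains to bound the nonnegative remainder $\int_0^R\psi v_r^2$ from below. Since $\phi_r=\psi$, integrating $\int_0^R\psi v^2=\int_0^R\phi_r v^2$ by parts (the boundary terms vanishing again) gives $\int_0^R\psi v^2=-2\int_0^R\phi\, v v_r$; writing $\phi v v_r=(\phi/\psi)(\psi^{1/2}v)(\psi^{1/2}v_r)$ and applying Cauchy--Schwarz with $\Lambda:=\sup_{(0,R)}(\phi/\psi)$ yields $\int_0^R\psi v^2\le 2\Lambda(\int_0^R\psi v^2)^{1/2}(\int_0^R\psi v_r^2)^{1/2}$, hence $\int_0^R\psi v_r^2\ge \frac{1}{4\Lambda^2}\int_0^R\psi v^2=\frac14(\sup\phi/\psi)^{-2}\int_0^R\psi^{n-1}\xi^2$. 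Inserting this into the previous identity and recognising the coefficient $\frac14\big((\sup\phi/\psi)^{-2}-n(n-2)K_\psi\big)=H_{n,\psi}$ gives \eqref{improved:hardy}.

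Finally, for the positivity of $H_{n,\psi}$: when $\psi=\sinh$ one has $K_\psi=-1$, so $H_{n,\psi}>0$ automatically. When $\psi=\sin$ one has $K_\psi=1$ and $\phi/\psi=(1-\cos r)/\sin r=\tan(r/2)$ is increasing, hence $\Lambda^{-2}=\cot^2(R/2)=\sin^2 R/(1-\cos R)^2$; since $r\mapsto \sin^2 r/(1-\cos r)^2=(1+\cos r)/(1-\cos r)$ is decreasing, the hypothesis $R<R_0$ is exactly the statement $\Lambda^{-2}>n(n-2)$, which gives $H_{n,\psi}>0$ and therefore \eqref{hardy}. The only genuinely delicate point is the justification of the vanishing boundary terms at the origin: this is where the assumption $n\ge 3$ enters, since $\psi^{n-2}(r)\to0$ and $v=\psi^{(n-2)/2}\xi\to0$ as $r\to0^+$; rigorously one carries out the integrations by parts on $(\varepsilon,R)$ and lets $\varepsilon\to0^+$, checking that the endpoint terms at $\varepsilon$ tend to zero.
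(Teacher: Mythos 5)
Your proof is correct and follows essentially the same route as the paper: the substitution $v=\psi^{(n-2)/2}\xi$ is exactly the paper's $\varphi$, the identity you derive via the ground-state substitution (using $\psi_r^2=1-K_\psi\psi^2$ and $\psi_{rr}=-K_\psi\psi$) is the same one the paper obtains by direct integration by parts, and the weighted Poincar\'e inequality with constant $\frac14(\sup_{(0,R)}\phi/\psi)^{-2}$ is proved by the identical Cauchy--Schwarz argument. Your explicit treatment of the boundary terms at the origin (where $n\ge 3$ enters) is a welcome extra detail that the paper leaves implicit.
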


Note inequality \eqref{improved:hardy} is really an improved Hardy inequality only if 
$H_{n,\psi}>0$. This holds for any geodesic ball in the hyperbolic space. Unfortunately, 
in the elliptic case we only have been able to prove it for geodesic balls of radius 
$R<R_0$. It would be interesting to obtain an improvement of the constant $H_{n,\psi}$ 
defined in \eqref{Hardy:ctant} to have an \eqref{improved:hardy} in large balls (with 
positive $H_{n,\psi})$.

%
%


Finally, let us to mention that the bibliography studying the regularity of extremal 
solutions in a general domain $\Omega\subset\mathbb{R}^n$ with the standard Euclidean 
metric is extensive. However, only partial answers are known for general nonlinearities 
$f$. We refer the reader to \cite{Cabre09,CS,Dupaigne,Nedev,Nedev01,S13,V13} and 
references therein.

\begin{Notation}
We always assume that the radius $R$ of the geodesic ball $\mathcal{B}_R$ is fixed. 
Therefore, all the universal constants appearing in this work, included the ones in the 
estimates of Theorem~\ref{thm1}, may depend on $R$. Moreover, as usual we denote by $C$ or $M$ 
the universal constants appearing in some inequalities in this paper. The value of these 
constants may vary even in the same line.
\end{Notation}

The paper is organized as follows. In Section \ref{radial} we prove the radial 
symmetry and the monotonicity property of semistable solutions established in 
Theorem~\ref{radsemistab}. Section~\ref{regradial} deals with the regularity 
of semistable and extremal solutions. We prove our $L^\infty$, $L^p$, and 
$W^{1,p}$ estimates of Theorem \ref{thm1} and Corollary~\ref{cor1}. 
Finally, in Section~\ref{examples} we find the extremal parameter 
and the extremal solution for the exponential and power nonlinearities 
considered in Theorems~\ref{thm:hyper} and \ref{thm:ellip}, establishing the sharpness of 
Theorem~\ref{thm1}.

\section{Radial symmetry of semistable solutions}\label{radial}

This section will be devoted to the proof of Theorem \ref{radsemistab}. 
The radial symmetry of positive solutions to uniformly elliptic problems on radially 
symmetric domains has been subject of an extensive study, essentially 
started by the celebrated work of Gidas, Ni, and Nirenberg \cite{GNN}. Most 
of these symmetry results are based on the moving plane method as well 
as on the use of the Maximum Principle and its generalizations. Here, we 
will follow a more direct approach which uses the semistability of our 
solutions and was applied in \cite{CES3} and \cite{CES4} to obtain 
symmetry results for semistable solutions to reaction-diffusion equations 
involving the $p$-Laplacian.

\begin{proof}[Proof of Theorem~{\rm\ref{radsemistab}}] 
Let $u\in C^2(\mathcal{B}_R)$ be a classical semistable solution of 
\eqref{probintro}. Note that the semistability condition \eqref{semistability} is
equivalent to the nonnegativity of the first eigenvalue of the linearized 
operator $-\Delta_g-f'(u)$ in $\mathcal{B}_R$, \textit{i.e.},
\begin{equation}\label{1st:linearized}
\lambda_1(-\Delta_g-f'(u),\mathcal{B}_R)
=
\inf_{\xi\in H^1_0(\mathcal{B}_R)\setminus \{0\}}
\frac{\int_{\mathcal{B}_R}\{|\nabla_g\xi|^2-f'(u)\xi^2\}\,dv_g}{\int_{\mathcal{B}_R}\xi^2\,dv_g}
\geq 
0.
\end{equation}

Let $u_\theta = \frac{\partial u}{\partial \theta}$ be any angular derivative 
of $u$. On the one hand, by the fact that $u\in C^2(\mathcal{B}_R)$, we clearly have
$$
\int_{\mathcal{B}_R} |\nabla_g u_\theta|^2 \, dv_g < \infty.
$$
Moreover, the regularity up the boundary of $u$ and the fact that $u = 0$ on $\partial \mathcal{B}_R$ trivially 
give that $u_\theta = 0$ on $\partial \mathcal{B}_R$. Hence, $u_\theta \in H^{1}_{0} (\mathcal{B}_R)$. 

On the other hand, noting that in the spherical coordinates given by \eqref{polar} the Riemannian 
Laplacian of $u = u(r, \theta_1,..,\theta_{n-1})$ is given by
$$
\Delta_{g} u 
= 
\frac{1}{\psi(r)^{n-1}} (\psi(r)^{n-1} u_r)_r + \frac{1}{\psi(r)^{2}} \Delta_{\mathbb{S}^{n-1}} u,
$$
where $\Delta_{\mathbb{S}^{n-1}}$ is the Riemannian Laplacian on the unit sphere $\mathbb{S}^{n-1}$, 
and by the radial symmetry of the weight $\psi$, we 
can differentiate problem \eqref{probintro} to see that $u_\theta$ 
(weakly) satisfies
$$
\left\{
\begin{array}{rcll}
-\Delta_{g} u_\theta &=& f'(u) u_\theta  &\mbox{in}\, \mathcal{B}_R, \\
u_\theta &=& 0 &\mbox{on}\, \partial \mathcal{B}_R.
\end{array}
\right.
$$
Therefore, multiplying the above equation on $\mathcal{B}_R$ and integrating by parts we have
$$
\int_{\mathcal{B}_R}|\nabla_g u_\theta|^2-f'(u)u_\theta^2\,dv_g=0,
$$
and hence, from \eqref{1st:linearized} (taking $\xi=u_\theta$ if necessary) it follows necessarily that either $|u_\theta|$ is a first 
positive eigenfunction of the linearized operator at $u$ in $\mathcal{B}_R$ or $u_\theta\equiv 0$. But by the periodicity of $u$ with respect to $\theta$ we
see that $u_\theta$ necessarily changes sign unless it is constant (equal to zero).
Thus $u_\theta \equiv 0$ for any $\theta \in S^{n-1}$, which means that 
$u$ is radial. 

Finally, if we pass to radial coordinates we see that $u = u(r)$ 
satisfies
$$
- \Big(\psi (r)^{n-1} u_r \Big)_r = \psi (r)^{n-1} f(u)  \qquad\mbox{in}\, (0,R).
$$
Integrating the previous equation from $0$ to any $s\in(0,R)$ with respect to $r$, recalling 
that $f(u)$ is positive, $\psi$ is also positive in $(0,R]$, and $u_r(0)=0$, we have
$$
\psi (s)^{n-1} u_r (s) 
= \int_{0}^{s} \Big( \psi (r)^{n-1} u_r (r) \Big)_r \, dr 
= - \int_{0}^{s} \psi (r)^{n-1} f(u(r)) \, dr 
<0.
$$
Thus $u_r(s) < 0$ for all $s\in(0,R)$, \textit{i.e.}, $u$ is decreasing. This concludes 
the proof. 
\end{proof}
\section{Regularity of radial semistable solutions}\label{regradial}

Let us begin by rewriting problem \eqref{probintro}, for radial solutions 
$u\in C^2(\mathcal{B}_R)$, as
\begin{equation}\label{probrad2}
\left\{
\begin{array}{rcll}
- \Big( \psi(r)^{n-1} u_r \Big)_r &=& \psi(r)^{n-1} f(u)  &\mbox{in}\, (0,R),\\
u &>& 0 &\mbox{in}\, (0,R),\\
u_r (0) = u (R) &=& 0,
\end{array}
\right.
\end{equation}
and considering the quadratic form associated to the second variation of the energy functional, 
evaluated at $u$, written in radial form:
$$
Q_u (\xi) := \int_{0}^{R} \psi(r)^{n-1} \{\xi_{r}^{2} -f'(u)\xi^2\}\, dr
$$
for every Lipschitz function $\xi$ such that $\xi(R)=0$.

We want to see that the results by Cabr\'e and Capella in \cite{CC05} 
for the Euclidean case carry over to the general Riemannian model setting. 
We start by proving the following lemma.

\begin{Lemma}
Let $f$ be a locally Lipschitz positive function. Assume that $\psi\in C^2([0,R])$ 
is positive in $(0,R]$ and satisfies \eqref{hyppsi}.
If $u\in C^2(\mathcal{B}_R)$ is a semistable classical solution of \eqref{probintro},
then
\begin{equation}\label{etapsi}
(n-1)\int_0^R\psi^{n-1}u_r^2(\psi')^2\eta^2\,dr
\leq
\int_0^R\psi^{n-1}u_r^2\{(\psi\eta)_r^2+(n-1)\psi\psi''\eta^2\}\,dr
\end{equation}
for every Lipschitz function $\eta$ such that $\eta(R)=0$.
\end{Lemma}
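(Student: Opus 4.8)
The plan is to reduce everything to a one-dimensional computation and then exploit the semistability with a carefully weighted test function, following the scheme of Cabr\'e--Capella but keeping track of the derivatives of $\psi$.

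First I would invoke Theorem~\ref{radsemistab} to write $u=u(r)$ with $v:=u_r<0$ on $(0,R)$ and $v(0)=0$, so that the semistability condition \eqref{semistability} becomes $Q_u(\xi)\ge 0$ for every Lipschitz $\xi$ with $\xi(R)=0$. The key preliminary step is to differentiate the radial equation in \eqref{probrad2} with respect to $r$. Starting from $-(\psi^{n-1}u_r)_r=\psi^{n-1}f(u)$, differentiating once, and eliminating the spurious term $(n-1)\psi^{n-2}\psi'f(u)$ by reinserting the equation itself, I expect to obtain the linearized problem for $v$,
\begin{equation*}
-(\psi^{n-1}v_r)_r-(n-1)\psi^{n-3}\bigl(\psi\psi''-(\psi')^2\bigr)v=\psi^{n-1}f'(u)\,v \quad\text{in }(0,R).
\end{equation*}
The curvature-type coefficient $\psi\psi''-(\psi')^2$ is precisely the new feature absent in the Euclidean case $\psi(r)=r$ (where it reduces to $-1$), and getting its weight power right is the most delicate part of the bookkeeping. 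Since $f$ is only locally Lipschitz I would read this identity in the a.e./weak sense, which is all that the subsequent integrations by parts require.

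Next I would test the semistability with $\xi=v\tilde\eta$ for an arbitrary Lipschitz $\tilde\eta$ vanishing at $R$. Multiplying the linearized equation by $v\tilde\eta^2$ and integrating, the term $\int_0^R\psi^{n-1}f'(u)v^2\tilde\eta^2\,dr$ can be rewritten, after one integration by parts, as $\int_0^R\psi^{n-1}(v_r^2\tilde\eta^2+2vv_r\tilde\eta\tilde\eta_r)\,dr-(n-1)\int_0^R\psi^{n-3}(\psi\psi''-(\psi')^2)v^2\tilde\eta^2\,dr$; here the boundary contributions vanish at $r=R$ because $\tilde\eta(R)=0$ and at $r=0$ because $\psi(0)=0$ by \eqref{hyppsi} (while $v,v_r,\tilde\eta$ stay bounded). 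Plugging this into $0\le Q_u(v\tilde\eta)=\int_0^R\psi^{n-1}\{(v\tilde\eta)_r^2-f'(u)v^2\tilde\eta^2\}\,dr$, the squares $v_r^2\tilde\eta^2$ and the cross terms $2vv_r\tilde\eta\tilde\eta_r$ cancel exactly, leaving
\begin{equation*}
(n-1)\int_0^R\psi^{n-3}(\psi')^2v^2\tilde\eta^2\,dr\le \int_0^R\psi^{n-1}v^2\tilde\eta_r^2\,dr+(n-1)\int_0^R\psi^{n-2}\psi''v^2\tilde\eta^2\,dr.
\end{equation*}

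Finally, specializing to $\tilde\eta=\psi\eta$ (which is Lipschitz and vanishes at $R$ whenever $\eta$ does) and using $v^2=u_r^2$, every term picks up an extra factor $\psi^2$, turning $\psi^{n-3}(\psi')^2\tilde\eta^2$ into $\psi^{n-1}(\psi')^2\eta^2$, $\psi^{n-1}\tilde\eta_r^2$ into $\psi^{n-1}(\psi\eta)_r^2$, and $\psi^{n-2}\psi''\tilde\eta^2$ into $\psi^{n-1}\psi\psi''\eta^2$, which is exactly \eqref{etapsi}. I expect the only real obstacles to be the algebraic derivation of the linearized equation with the correct powers of $\psi$ and the verification that the boundary term at the origin is harmless, both of which are controlled by the normalization \eqref{hyppsi}.
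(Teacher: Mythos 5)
Your proposal is correct and follows essentially the same route as the paper: differentiate the radial equation to obtain the linearized equation satisfied by $u_r$ (your displayed identity is exactly the paper's equation \eqref{ur} rewritten), then test the semistability quadratic form with $\xi=u_r\psi\eta$ and integrate by parts, with the boundary terms handled by $\eta(R)=0$ and \eqref{hyppsi}. The only difference is organizational (you derive the inequality for a general $\tilde\eta$ and then substitute $\tilde\eta=\psi\eta$, while the paper computes $Q_u(u_r\psi\eta)$ directly), which changes nothing of substance.
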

\begin{proof}
Differentiating equation \eqref{probrad2} it is easy to see that
\begin{equation}\label{ur}
- ( \psi^{n-1} u_{rr} )_r 
= 
\psi^{n-1} \left(f'(u)+(n-1)\left(\frac{\psi'}{\psi}\right)'\right) u_r
\quad\mbox{ in } (0,R).
\end{equation}

Thanks to equation \eqref{ur}, we are able prove that for any $\eta
\in H^1 \cap L^\infty (0,R)$ with support in $(0,R)$
there holds
$$
Q_u ( u_r \psi \eta ) = 
\int_{0}^{R} \psi^{n-1} u_{r}^{2} \left\{(\psi\eta)_{r}^{2} 
- 
(n-1)\Big((\psi')^2-\psi\psi''\Big)\eta^2\right\}\, dr
\geq 0.
$$

In fact, integrating by parts and using \eqref{ur} we are able to
compute

$$
\begin{array}{l}
Q_u (u_r \psi \eta)
=
\displaystyle 
\int_{0}^{R} \psi^{n-1} \left\{ u_{rr}^{2} \psi^2 \eta^2+ u_{r}^{2} ( \psi \eta )_{r}^{2} 
+ 
(\psi^2 \eta^2)_r u_{r} u_{rr} - f'(u) u_{r}^{2} \psi^2 \eta^2 \right\}  \, dr
\\
= 
\displaystyle 
\int_{0}^{R}  \left\{ u_{rr}^{2} \psi^2 \eta^2+ u_{r}^{2} ( \psi \eta )_{r}^{2} 
- f'(u) u_{r}^{2} \psi^2 \eta^2 \right\} \psi^{n-1} - \psi^2 \eta^2 (u_{r} u_{rr}\psi^{n-1})_r  \, dr
\\
= 
\displaystyle 
\int_{0}^{R}  \left\{ u_{r}^{2} ( \psi \eta )_{r}^{2} - f'(u) u_{r}^{2} \psi^2 \eta^2 \right\} \psi^{n-1} 
- \psi^2 \eta^2 u_{r} ( u_{rr}\psi^{n-1})_r  \, dr
\\
= 
\displaystyle 
\int_{0}^{R} \psi^{n-1} u_{r}^{2}\left\{( \psi \eta )_{r}^{2} 
+(n-1)\left(\frac{\psi'}{\psi}\right)'(\psi\eta)^2\right\}\, dr
\\
= 
\displaystyle 
\int_{0}^{R} \psi^{n-1} u_{r}^{2}\left\{( \psi \eta )_{r}^{2} 
-(n-1)\Big((\psi')^2-\psi\psi''\Big)\eta^2\right\}\, dr.
\end{array}
$$
Since for $n \geq 2$ we have that the singleton $\{0\}$ is of zero
capacity, the fact that $u \in H_{0}^{1} (\mathcal{B}_R)$ gives that
the equation above also holds for $\eta$ not necessarily vanishing
around $0$ with $|\nabla (\psi \eta)| \in L^\infty$.
\end{proof}

Now, we are able to prove the key estimate \eqref{keykey} used in  
our main regularity result.

\begin{Lemma}\label{lemma:key}
Let $f$ be a locally Lipschitz positive function. Assume that $\psi\in C^2([0,R])$ 
is positive in $(0,R]$ and satisfies \eqref{hyppsi}. Let $\delta=\delta(\psi)\in(0,R/2)$ 
be such that $\psi'>0$ in $[0,\delta]$.
If $u\in C^2(\mathcal{B}_R)$ is a semistable classical solution of \eqref{probintro}, 
then there exists a positive constant $C_{n,\alpha,\psi}$ depending only on $n$, $\alpha$, 
and $\psi$ such that
$$
\int_{0}^{\delta} u_{r}^{2} \psi^{n-1-2\alpha}\,dr 
\leq
C_{n,\alpha,\psi}\, \| u \|_{L^1 (\mathcal{B}_R)}^2
$$
for every
\begin{equation}\label{eq:alpha}
1 \leq \alpha < 1 + \sqrt{n-1}.
\end{equation}
\end{Lemma}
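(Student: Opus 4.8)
The plan is to exploit the inequality \eqref{etapsi} from the previous lemma by making a clever choice of the test function $\eta$, so that the left-hand side controls the desired weighted integral of $u_r^2$ while the right-hand side can be bounded by $\|u\|_{L^1}^2$. First I would set $\eta = \psi^{-\alpha}$ (possibly truncated so that $\eta(R)=0$, or localized via a cutoff supported in $[0,\delta']$ for some $\delta<\delta'<R/2$, since the claimed estimate only concerns the interval $(0,\delta)$). With this choice one computes $(\psi\eta)_r = (\psi^{1-\alpha})_r = (1-\alpha)\psi^{-\alpha}\psi'$, so that $(\psi\eta)_r^2 = (1-\alpha)^2\psi^{-2\alpha}(\psi')^2$, and the term $(n-1)\psi\psi''\eta^2 = (n-1)\psi^{1-2\alpha}\psi''$. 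Substituting into \eqref{etapsi} and cancelling a common factor, the left side becomes $(n-1)\int \psi^{n-1-2\alpha}u_r^2(\psi')^2\,dr$ and the right side becomes $\int \psi^{n-1-2\alpha}u_r^2\{(1-\alpha)^2(\psi')^2 + (n-1)\psi\psi''\}\,dr$. Rearranging gives
\begin{equation}\label{planrearr}
\bigl((n-1)-(1-\alpha)^2\bigr)\int_0^R \psi^{n-1-2\alpha}u_r^2(\psi')^2\,dr
\leq (n-1)\int_0^R \psi^{n-2\alpha}\psi''\,u_r^2\,dr.
\end{equation}
The coefficient $(n-1)-(1-\alpha)^2$ on the left is strictly positive precisely in the range \eqref{eq:alpha}, namely $1\le\alpha<1+\sqrt{n-1}$, which explains the stated hypothesis on $\alpha$.

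The next step is to convert \eqref{planrearr} into a clean bound on $\int_0^\delta u_r^2\psi^{n-1-2\alpha}\,dr$. On the interval $[0,\delta]$ I have $\psi'>0$ by the choice of $\delta$, and by \eqref{hyppsi} together with $\psi\in C^2([0,R])$ one has $\psi'(0)=1$ and $\psi''(0)=0$; hence $\psi'$ is bounded below away from zero and $\psi''$ is bounded (in fact $|\psi''|\le C\psi$ near $0$ since $\psi''(0)=0$, $\psi(0)=0$, $\psi'(0)=1$, giving $\psi''/\psi$ bounded). Thus on $[0,\delta]$ the factor $(\psi')^2$ is bounded below and the right-hand side of \eqref{planrearr}, after the cutoff, can be absorbed: the troublesome $\psi''$ term either has the favorable sign or is dominated. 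More precisely, I would split the right-hand integral over $[0,\delta]$ and over the cutoff region $[\delta,\delta']$; on $[0,\delta]$ the term $(n-1)\psi''\psi\,\eta^2$ is $O(\psi^2)\cdot\eta^2$ relative to the $(\psi')^2\eta^2$ term and can be absorbed into the left side for $\delta$ small (shrinking $\delta$ if necessary), while on $[\delta,\delta']$ everything is a bounded weight times $u_r^2$.

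The remaining and genuinely substantive step is to bound the leftover integral of $u_r^2$ over the fixed compact annular region $[\delta,\delta']\subset(0,R)$ by $\|u\|_{L^1(\mathcal{B}_R)}^2$, since that is the only place the $L^1$ norm can enter. Here I would use interior elliptic estimates: on a compact subset of $(0,R)$ bounded away from both the origin and the boundary, the operator $-\Delta_g$ is uniformly elliptic with smooth coefficients, so standard interior gradient estimates bound $\|u_r\|_{L^\infty([\delta,\delta'])}$ (or its $L^2$ norm) by $\|u\|_{L^1}$ plus $\|f(u)\|$ on a slightly larger annulus; combined with the monotonicity $u_r<0$ and the fact that $u$ is positive and decreasing one controls $f(u)$ on the annulus by $u$ itself, and ultimately by $\|u\|_{L^1}$. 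I expect \textbf{this last interior bound to be the main obstacle}, because it requires a careful local regularity argument (Harnack or $L^1$-to-$L^\infty$ elliptic estimates on the annulus, using positivity and monotonicity of $u$ to handle the nonlinear term $f(u)$ without any growth hypothesis on $f$). Once this annular bound is in place, combining it with the absorption argument from \eqref{planrearr} yields the claimed estimate with a constant $C_{n,\alpha,\psi}$ depending only on $n$, $\alpha$, and $\psi$ (and implicitly on the fixed radius $R$), completing the proof.
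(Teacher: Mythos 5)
Your choice of test function and the resulting rearrangement are essentially the paper's: the authors plug a regularized version of $\eta=\psi^{-\alpha}$ into \eqref{etapsi} and arrive at exactly your coefficient $(n-1)-(1-\alpha)^2$, which is where the range \eqref{eq:alpha} comes from. Two technical points in that part deserve care. First, $\eta=\psi^{-\alpha}$ is not admissible as written, since $(\psi\eta)_r=(1-\alpha)\psi^{-\alpha}\psi'$ is unbounded at the origin for $\alpha>1$ and both sides of your rearranged inequality are a priori infinite; the paper freezes $\eta$ at the constant value $\psi(\varepsilon)^{-\alpha}-\psi(\delta)^{-\alpha}$ on $[0,\varepsilon]$ and lets $\varepsilon\to0$ only at the end. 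Second, your justification of the absorption of the $\psi''$ term via ``$|\psi''|\le C\psi$ near $0$'' does not follow from $\psi\in C^2$ with $\psi''(0)=0$ (take $\psi(r)=r+r^{5/2}$); what actually saves you is that the relevant ratio is $\psi|\psi''|/(\psi')^2$, which tends to $0$ at the origin. The paper avoids shrinking $\delta$ altogether: it bounds $|\psi''|$ by a constant and uses the elementary inequality $t^{-\alpha}(1+t^{1-\alpha})\le\tfrac12 t^{-2\alpha}+C_{n,\alpha,\psi}t^{n-1}$ for $t>0$, which absorbs half of the singular weight into the left side and leaves the tame remainder $C\int_0^\delta\psi^{2n-2}u_r^2\,dr$.

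The genuine gap is the step you yourself flag as the main obstacle: bounding the leftover integral of $u_r^2$ (over the annulus in your scheme, or $\int_0^\delta\psi^{2n-2}u_r^2\,dr$ in the paper's) by $\|u\|_{L^1(\mathcal{B}_R)}^2$. Interior elliptic estimates cannot do this for a general positive locally Lipschitz $f$: they produce a bound involving $\|f(u)\|$ on a slightly larger set, and since $f$ carries no growth hypothesis, $f(u)$ on the annulus is only controlled by $f(C\|u\|_{L^1})$, which is not linear in $\|u\|_{L^1}$; ``controlling $f(u)$ by $u$ itself'' is exactly what one cannot do here. The correct, and much more elementary, argument uses only the sign of $f$ through the equation, not its size: since $-(\psi^{n-1}u_r)_r=\psi^{n-1}f(u)>0$, the function $-\psi^{n-1}u_r$ is nondecreasing, so $-u_r(s)\psi(s)^{n-1}\le-u_r(\rho)\psi(\rho)^{n-1}$ for every $s<\rho$; choosing $\rho\in(\delta,2\delta)$ by the mean value theorem so that $-u_r(\rho)\le u(\delta)/\delta$, and using that $u$ is positive and radially decreasing to get $u(\delta)\le C_{n,\psi}\|u\|_{L^1(\mathcal{B}_R)}$, one obtains the uniform pointwise bound $-u_r(s)\psi(s)^{n-1}\le C_{n,\psi}\|u\|_{L^1(\mathcal{B}_R)}$ for all $s\in(0,\delta)$. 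Squaring and integrating finishes the proof with no regularity theory at all. Without this (or an equivalent) observation your argument does not close.
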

\begin{proof}
Let $\varepsilon\in(0,\delta)$ and define
$$
\eta_\varepsilon (r) 
:=\left\{ 
\begin{array}{cll}
\psi (\varepsilon)^{-\alpha} - \psi (\delta)^{-\alpha} \quad &\mbox{if } 0 \leq r \leq \varepsilon,
\\
\psi (r)^{-\alpha} - \psi (\delta)^{-\alpha} \quad &\mbox{if } \varepsilon \leq r \leq \delta,
\\
0 \qquad \quad  &\mbox{if }  \delta \leq r \leq R.
\end{array}
\right.
$$
Observe that both $\eta_\varepsilon$ and $(\psi\eta_\varepsilon)_r$ are bounded. 
By \eqref{etapsi} with $\eta=\eta_\varepsilon$ we obtain
$$
\begin{array}{l}
\displaystyle  
(n-1)\int_0^\varepsilon \psi^{n-1}u_r^2(\psi')^2\eta_\varepsilon^2\,dr
+
(n-1)\int_\varepsilon^\delta \psi^{n-1}u_r^2(\psi')^2\eta_\varepsilon^2\,dr\\
\displaystyle  
\leq 
\int_0^\varepsilon \psi^{n-1}u_r^2(\psi')^2\eta_\varepsilon^2\,dr
+
\int_\varepsilon^\delta\psi^{n-1}u_r^2(\psi')^2\Big\{(1-\alpha)\psi^{-\alpha}-\psi(\delta)^{-\alpha}\Big\}^2\,dr
\\
\displaystyle  
\hspace{0.4cm}+(n-1)\int_0^\delta\psi^{n-1}u_r^2\psi|\psi''|\eta_\varepsilon^2\,dr.
\end{array}
$$
Using that $n\geq 2$ and $\eta_\varepsilon^2\leq \psi^{-2\alpha}+\psi(\delta)^{-2\alpha}$, 
we have
$$
\begin{array}{ll}
\displaystyle  
(n-1)\int_\varepsilon^\delta \psi^{n-1}u_r^2(\psi')^2\eta_\varepsilon^2\,dr
&\leq 
\displaystyle  
\int_\varepsilon^\delta\psi^{n-1}u_r^2(\psi')^2\Big\{(1-\alpha)\psi^{-\alpha}-\psi(\delta)^{-\alpha}\Big\}^2\,dr
\\
&\displaystyle  
+(n-1)\int_0^\delta\psi^{n-1}u_r^2\psi|\psi''|(\psi^{-2\alpha}+\psi(\delta)^{-2\alpha})\,dr.
\end{array}
$$
Now, expanding and rearranging the terms in the integrals, and using that $\psi$ is 
increasing in $(0,\delta)$, we get

$$
\begin{array}{l}
\displaystyle  
(n-1-(1-\alpha)^2)\int_\varepsilon^\delta \psi^{n-1}u_r^2(\psi')^2\psi^{-2\alpha}\,dr
\\
\displaystyle  
\hspace{0.5cm}
\leq
\int_0^\delta\psi^{n-1}u_r^2(\psi')^2\psi^{-\alpha}\psi(\delta)^{-\alpha}
\Big\{2(\alpha+n-2)+\frac{\psi^\alpha}{\psi(\delta)^{\alpha}}\Big\}\,dr
\\
\displaystyle  
\hspace{0.5cm}
+(n-1)\int_0^\delta\psi^{n-1}u_r^2\psi|\psi''|\psi^{-2\alpha}
\Big\{1+\frac{\psi^{2\alpha}}{\psi(\delta)^{2\alpha}}\Big\}\,dr
\\
\displaystyle  
\hspace{0.5cm}
\leq
M_{n,\alpha,\psi}\int_0^\delta\psi^{n-1}u_r^2\psi^{-\alpha}\{(\psi')^2+\psi^{1-\alpha}\}\,dr,
\end{array}
$$
where $M_{n,\alpha,\psi}$ is a positive constant depending only on $n$, $\alpha$, and $\psi$.

Using that $\inf_{(0,\delta)}\psi'$ and $\sup_{(0,\delta)}\psi'$ are 
positive, \eqref{eq:alpha}, and letting $\varepsilon$ go to zero 
we get
\begin{equation}\label{key1}
\int_0^\delta \psi^{n-1}u_r^2\psi^{-2\alpha}\,dr
\leq
\frac{M_{n,\alpha,\psi}}{n-1-(1-\alpha)^2} \int_0^\delta \psi^{n-1}u_r^2\psi^{-\alpha}\left\{1+\psi^{1-\alpha}\right\}\,dr.
\end{equation}

Now, the fact that there exists a positive constant $C_{n,\alpha,\psi}$ 
depending only on $n$, $\alpha$, and $\psi$ such that
$$
\frac{M_{n,\alpha,\psi}}{n-1-(1-\alpha)^2}\, t^{-\alpha}(1+t^{1-\alpha})
\leq 
\frac{1}{2}t^{-2\alpha}+C_{n,\alpha,\psi}t^{n-1}
\quad 
\textrm{for all }t>0
$$
and \eqref{key1} give
\begin{equation}\label{key2}
\frac{1}{2}\int_0^\delta \psi^{n-1}u_r^2\psi^{-2\alpha}\,dr
\leq
C_{n,\alpha,\psi} \int_0^\delta \psi^{2n-2}u_r^2\,dr.
\end{equation}
Moreover, since $u$ is positive and radially decreasing (remember that $\delta\in(0,R/2)$ 
only depends on $\psi$), we have
\begin{equation}\label{udelta}
u(\delta) 
\leq 
C_{n,\psi} 
\int_{0}^{\delta} u (r) \psi^{n-1}\,dr 
\leq 
C_{n,\psi} \| u \|_{L^1 (\mathcal{B}_R)}
\end{equation}
and
\begin{equation}\label{ur:bis}
- u_r(\rho)
= - \frac{u(2\delta)-u(\delta)}{\delta}
\leq \frac{u(\delta)}{\delta}\quad
\textrm{for some }\rho\in(\delta,2\delta).
\end{equation}
Therefore, integrating the equation \eqref{probrad2} from $s\in(0,\delta)$ to
$\rho$ and noting that $f$ is positive, we obtain

$$
\begin{array}{lll}
- u_r(s) \psi(s)^{n-1} 
&=&\displaystyle 
- u_r(\rho) \psi(\rho)^{n-1} - \int_{s}^{\rho} f(u) \psi^{n-1}\,dr
\leq
\frac{u(\delta)}{\delta}\psi(\rho)^{n-1}
\\
&\leq& \displaystyle  C_{n,\psi} \| u \|_{L^1 (\mathcal{B}_R)}.
\end{array}
$$
Squaring this inequality and integrating for $s$ between $0$ and
$\delta$ we get
$$
\int_{0}^{\delta} u_{r}^{2} \psi^{2n-2}\,dr 
\leq 
C_{n,\psi} \| u \|_{L^1 (\mathcal{B}_R)}^2.
$$
We conclude the proof going back to \eqref{key2}.
\end{proof}


Thanks to Lemma \ref{lemma:key} we are now ready to give the proof of 
Theorem~{\rm\ref{thm1}}.

\begin{proof}[Proof of Theorem {\rm \ref{thm1}}]

Let $\delta\in(0,R/2)$ as in Lemma~\ref{lemma:key}. Using Schwarz inequality and \eqref{udelta}
we obtain
\begin{equation}\label{eq:6-21}
\begin{array}{lll}
\displaystyle |u(t)| 
&=&
\displaystyle 
\left|u(\delta)+\int _{t}^{\delta}-u_{r} \psi^{(n-1-2\alpha)/2} \psi^{(2\alpha-n+1)/2}dr\right|\nonumber
\\
&\le&  
\displaystyle 
C_{n,\psi} \|u\|_{L^1(\mathcal{B}_R)} 
+
\left( \int _{0}^{\delta} u_{r}^{2} \psi^{n-1-2\alpha}dr\right)^{\frac{1}{2}} 
\left(\int_{t}^{\delta}\psi^{2\alpha-n+1}dr\right)^{\frac{1}{2}}
\end{array}
\end{equation}
for all $t\in(0,\delta)$. Therefore, from Lemma~\ref{lemma:key} we deduce
\begin{equation}\label{eq:6-22}
|u(t)| 
\leq 
C_{n,\alpha,\psi} \left\{1+\left(\int_{t}^{\delta} \psi^{2\alpha-n+1}\,dr\right)^{\frac{1}{2}}\right\}
\| u\|_{L^1 (\mathcal{B}_R)}
\end{equation}
for all $t\in(0,\delta)$ and every $\alpha\in[1,1+\sqrt{n-1})$.

(a) \textit{$L^\infty$ estimate \eqref{Linfty:estimate}}: 
Assume $n\leq 9$. On the one hand, since $u$ is radially decreasing and thanks 
to \eqref{udelta}, we have that
\begin{equation}\label{linfty:bdary}
u(t) \leq u(\delta) \leq C_{n,\psi} \| u \|_{L^1 (\mathcal{B}_R)}
\quad
\textrm{for all }\delta \leq t < R.
\end{equation}

On the other hand, since $\psi\in C^2([0,R])$ is positive in $(0,R]$, $\psi(0)=0$, and 
$\psi'(0)=1$ by assumption, we note that the integral 
in \eqref{eq:6-22} is finite for $t=0$ if $2\alpha-n+1>-1$,
 \emph{i.e.},
$$
\int_0^{\delta} \psi^{2\alpha-n+1}dr\leq C_\psi<+\infty\qquad
\text{if} \qquad \alpha > \frac{n-2}{2}.
$$
Therefore, 
\begin{equation}\label{linfty:interior}
|u(t)| \leq C_{n,\alpha,\psi}\| u\|_{L^1 (\mathcal{B}_R)} \quad
\textrm{for all }0 < t < \delta,
\end{equation}
whenever 
$$
\max\left\{\frac{n-2}{2},1\right\}<\alpha<1+\sqrt{n-1}.
$$
Finally, since $2\leq n<10$, we can choose $\alpha$ (depending only on $n$) 
in the previous range to obtain \eqref{linfty:interior} with a constant 
$C_{n,\psi}$ depending only on $n$ and $\psi$.
The desired $L^\infty$ estimate \eqref{Linfty:estimate} follows 
from this fact and \eqref{linfty:bdary}.

(b) Assume $n\geq 10$.

\textit{$L^p$ estimate \eqref{Lp:estimate}}: On the one hand, the fact 
that $u$ is decreasing and \eqref{udelta} give that 
\begin{equation}\label{LqdeltaR}
\left(\int_\delta^R |u|^p\psi^{n-1}\,dt\right)^\frac{1}{p} 
\leq 
u(\delta) \left(\int_\delta^R \psi^{n-1}\,dt\right)^\frac{1}{p} \leq C_{n,\psi,p} \| u \|_{L^1 (\mathcal{B}_R)}.
\end{equation}

On the other hand, let $s\in(0,\delta)$. By \eqref{eq:6-22} it follows that
$$
\int_s^\delta |u|^p\psi^{n-1}\,dt
\leq
C_{n,\alpha,\psi}^p\| u\|_{L^1 (\mathcal{B}_R)}^p
\int_s^\delta \Big(1+\Big(\int_{t}^{\delta} \psi^{2\alpha-n+1}\,dr\Big)^{\frac{1}{2}}\Big)^p\psi^{n-1}\,dt
$$
for every $p\geq 1$. Notice that, again by \eqref{hyppsi}, we have:
$$
\int_0^\delta \Big(1+\Big(\int_{t}^{\delta} \psi^{2\alpha-n+1}\,dr\Big)^{\frac{1}{2}}\Big)^p\psi^{n-1}\,dt
\leq  C_{n,\alpha,\psi}< +\infty
$$
whenever
\begin{equation}\label{exppp}
\frac{2 \alpha - n + 2}{2} p + n-1 > -1,
\quad \textit{i.e.},\quad
p < \frac{2n}{n - 2 \alpha -2}.
\end{equation}
Therefore, for any 
$$
p < p_0=\frac{2n}{n - 2\sqrt{n-1}-4}
$$ 
we can choose $\alpha=\alpha(n,p)\in [1,1+\sqrt{n-1})$ such that condition \eqref{exppp}
holds, obtaining
%
$$
\left(\int_0^\delta |u|^p\psi^{n-1}\,dt\right)^\frac{1}{p}
\leq
C_{n,\psi,p}
\| u\|_{L^1 (\mathcal{B}_R)}.
$$
%
%
Taking into account \eqref{LqdeltaR} and applying Minkowski inequality, 
we reach the desired $L^p$ estimate \eqref{Lp:estimate}.

\textit{$W^{1,p}$ estimate \eqref{W1p:estimate}}:
Recall that every radial function $u$ in $H^1(\mathcal{B}_R)$ also 
belongs (as a function of $r = |x|$) to the Sobolev space $H^1(\delta, R)$ in one dimension.
Thus, by the Sobolev embedding in one dimension and \eqref{udelta}, we have
\begin{equation}\label{W1p:interior}
\begin{array}{lll}
\displaystyle \left(\int_\delta^R|u_r|^p\psi^{n-1}\,dr\right)^\frac{1}{p}
&\leq&
\displaystyle 
C_{n,\psi,p}\left(\int_\delta^R|u_r|^p\,dr\right)^\frac{1}{p}
\leq 
C_{n,\psi,p}\|u\|_{L^\infty(\delta,R)}
\\
&=&\displaystyle 
C_{n,\psi,p}u(\delta)
\leq 
C_{n,\psi,p}\|u\|_{L^1(\mathcal{B}_R)}.
\end{array}
\end{equation}
Observe that by equation \eqref{probrad2}, and since $f$ is positive, we have
$$
u_{rr}=-(n-1)\frac{\psi'}{\psi}u_r-f(u)\leq -(n-1)\frac{\psi'}{\psi}u_r
\quad \textrm{in }(0,R).
$$
Let $\rho\in(\delta,2\delta)$ such that \eqref{ur:bis} holds (as in the proof 
of Lemma~\ref{lemma:key}). Integrating the previous inequality with respect to $r$
from $t\in(0,\delta)$ to $\rho$, using \eqref{ur:bis} and \eqref{udelta}, 
as well as Schwarz inequality, we have
$$
\begin{array}{ll}
\displaystyle \frac{-u_r(t)}{n-1}
&\leq 
\displaystyle \frac{-u_r(\rho)}{n-1}+\int_t^\rho \frac{|\psi'|}{\psi}(-u_r)\,dr
\\
&\leq \displaystyle 
\frac{u(\delta)}{(n-1)\delta} +  \int_t^{2\delta} \frac{|\psi'|}{\psi}\psi^{-\frac{n-1}{2}+\alpha}(-u_r)\psi^{\frac{n-1}{2}-\alpha}\,dr
\\
&\leq \displaystyle 
C_{n,\psi}\|u\|_{L^1(\mathcal{B}_R)}
+ 
\left(\int_t^{2\delta} \left(\frac{\psi'}{\psi}\right)^2\psi^{-n+1+2\alpha}\,dr\right)^{\frac{1}{2}} 
\left(\int_t^{2\delta}u_r^2\psi^{n-1-2\alpha}\,dr\right)^{\frac{1}{2}}.
\end{array}
$$
Note that at this point we can use Lemma~\ref{lemma:key} with $\delta$ replaced by $2\delta$ 
(taking our original $\delta$ smaller if necessary). Using this fact 
we have
$$
-u_r(t)
\leq 
C_{n,\alpha,\psi}\|u\|_{L^1(\mathcal{B}_R)}
\Big(1+\Big(\int_t^{2\delta}\Big(\frac{\psi'}{\psi}\Big)^2\psi^{-n+1+2\alpha}\,dr\Big)^{\frac{1}{2}}\Big)
$$
for every $\alpha\in[1, 1 + \sqrt{n-1})$. Therefore, for this range of $\alpha$ and given $s\in (0,\delta)$, we get
$$
\int_s^\delta |u_r|^p\psi^{n-1}\,dt
\leq
C_{n,\alpha,\psi}^p\|u\|_{L^1(\mathcal{B}_R)}^p
\int_s^\delta\Big(1+\Big(\int_t^{2\delta} (\psi')^2\psi^{-n-1+2\alpha}\,dr\Big)^\frac{1}{2}\Big)^p\psi^{n-1}\,dt.
$$

Finally, note that 
$$
\int_0^\delta\Big(1+\Big(\int_t^{2\delta} (\psi')^2\psi^{-n-1+2\alpha}\,dr\Big)^\frac{1}{2}\Big)^p\psi^{n-1}\,dt
\leq C_{n,\psi,p}<+\infty
$$
whenever 
\begin{equation}\label{conddd}
\frac{2\alpha-n}{2} p + n-1 > -1,
\quad \textit{i.e.},\quad
p < \frac{2n}{n - 2 \alpha}
\end{equation}
(note that $n-2\alpha>0$ since $n\geq 10$ and $\alpha\in[1, 1 + \sqrt{n-1})$).
Therefore, for any 
$$
p < p_1=\frac{2n}{n- 2\sqrt{n-1}-2}
$$ 
we can choose $\alpha=\alpha(n,p)\in [1,1+\sqrt{n-1})$ such that \eqref{conddd}
holds, obtaining
$$
\int_s^\delta |u_r|^p\psi^{n-1}\,dt
\leq
C_{n,\psi,p}
\| u\|_{L^1 (\mathcal{B}_R)}^p.
$$

We conclude the proof using the previous estimate, \eqref{W1p:interior}, and Minkowski 
inequality, proving our $W^{1,p}$ estimate \eqref{W1p:estimate}.
\end{proof}

Finally, we prove Corollary~\ref{cor1} as an immediate consequence of Theorem~\ref{thm1}.

\begin{proof}[Proof of Corollary~{\rm\ref{cor1}}]
Since the extremal solution is a weak solution of the extremal problem \eqref{probla} 
for $\lambda=\lambda^*$, and hence $u^*\in L^1(\mathcal{B}_R)$, the result follows by 
applying Theorem~\ref{thm1} to minimal solutions $u_\lambda\in C^2(\mathcal{B}_R)$ for 
$\lambda\in(0,\lambda^*)$ and letting $\lambda\uparrow\lambda^*$.
\end{proof}


\section{Singular extremal solutions for exponential and power nonlinearities 
in space forms}\label{examples}

In this section we find the extremal parameter $\lambda^*$ and the extremal 
solution $u^*$ of problem \eqref{probla} for the exponential and power nonlinearities
considered in Theorems~\ref{thm:hyper} and \ref{thm:ellip}.

This will be achieved through the use of the Improved Hardy inequality established in 
Proposition~\ref{hardyprop} as well as the following 
uniqueness result, due to Brezis and V\'azquez \cite{BreVaz97}
for the Euclidean case (see also Proposition~3.2.1 in \cite{Dupaigne}). 
Its proof carries over easily to our setting thanks to the fact that, as commented in 
the Introduction, the structural hypothesis on the weight $\psi$ stated in 
\eqref{hyppsi} ensures that $\lambda_1(-\Delta_g;\mathcal{B}_R) > 0$.

\begin{Proposition}[\cite{BreVaz97,Dupaigne}]\label{uniqueness}
Let $\lambda_1(-\Delta_g;\mathcal{B}_R) > 0$ denote the principal eigenvalue
of the Dirichlet Laplace Beltrami operator $-\Delta_g$ in $\mathcal{B}_R$. 
Assume $f\in C^1(\mathbb{R})$ is convex. 

Let $u_1$, $u_2\in H^1_0(\mathcal{B}_R)$ be two stable weak solutions of \eqref{probintro}. 
Then, either $u_1 = u_2$ a.e. or $f(u) = \lambda_1 u$ on the essential ranges 
of $u_1$ and $u_2$. In the latter case, $u_1$ and $u_2$ belong to the eigenspace 
associated to $\lambda_1$. In particular, they are collinear.
\end{Proposition}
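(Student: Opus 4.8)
The plan is to set $w := u_2 - u_1 \in H^1_0(\mathcal{B}_R)$ and to exploit that, upon subtracting the two copies of \eqref{probintro}, $w$ solves the linear equation $-\Delta_g w = c(x)\,w$ weakly, where $c(x) := \big(f(u_2)-f(u_1)\big)/(u_2-u_1)$ is the secant slope of $f$ between $u_1(x)$ and $u_2(x)$ (and $c:=f'(u_1)$ where $u_1=u_2$). Convexity of $f$ enters only through the elementary bounds $c \le f'(u_2)$ on $\{w>0\}$ and $c \le f'(u_1)$ on $\{w<0\}$: a secant slope never exceeds the derivative at the larger endpoint. The guiding idea is that these bounds, played against the two stability inequalities \eqref{semistability} for $u_1$ and $u_2$, are rigid enough first to force $w$ to keep a constant sign, and then to pin down $f$ exactly.

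First I would show $w$ does not change sign. Testing $-\Delta_g w = c\,w$ against $w^+:=\max\{w,0\}\in H^1_0(\mathcal{B}_R)$ gives $\int_{\mathcal{B}_R}|\nabla_g w^+|^2\,dv_g = \int_{\mathcal{B}_R} c\,(w^+)^2\,dv_g$, and since $c\le f'(u_2)$ on $\{w>0\}$ this yields $\int_{\mathcal{B}_R}|\nabla_g w^+|^2\,dv_g \le \int_{\mathcal{B}_R} f'(u_2)(w^+)^2\,dv_g$. The stability \eqref{semistability} of $u_2$ applied to $\xi=w^+$ gives the reverse inequality, so equality holds. Hence, if $w^+\not\equiv 0$, the function $w^+$ realizes the infimum $0$ in the Rayleigh quotient of the linearized operator $-\Delta_g-f'(u_2)$ and is therefore a first eigenfunction; but a first eigenfunction is a.e. strictly positive by the strong maximum principle, which is incompatible with $w^+$ vanishing on $\{w\le 0\}$ unless that set is null. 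The symmetric computation with $w^-$ and the stability of $u_1$ shows that if $w^-\not\equiv0$ then $\{w\ge 0\}$ is null. Since $\{w\le0\}$ and $\{w\ge0\}$ cannot both be null, at most one of $w^+,w^-$ is not identically zero, so $w$ has a constant sign.

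If $w\equiv 0$ we are in the first alternative. Otherwise, say $w>0$ a.e.; the equality above gives $\int_{\mathcal{B}_R}(f'(u_2)-c)\,w^2\,dv_g = 0$ with a nonnegative integrand, whence $c=f'(u_2)$ a.e. Writing $c=\int_0^1 f'(u_1+tw)\,dt$, this equality forces $f'$ to be constant along each segment $[u_1(x),u_2(x)]$, i.e. $f$ is affine on the essential range, say $f(s)=as+b$ there. Subtracting the two equations then gives $-\Delta_g w = a\,w$ with $w>0$, so $w$ is a positive eigenfunction and necessarily $a=\lambda_1(-\Delta_g;\mathcal{B}_R)$. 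Finally, testing $-\Delta_g u_2 = a u_2 + b$ against the positive first eigenfunction $\phi_1$ of $-\Delta_g$ and integrating by parts forces $b\int_{\mathcal{B}_R}\phi_1\,dv_g = 0$, hence $b=0$. Therefore $f(s)=\lambda_1 s$ on the ranges and $-\Delta_g u_i=\lambda_1 u_i$, so both $u_i$ lie in the one-dimensional eigenspace of $\lambda_1$ and are collinear, which is the second alternative.

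The analytic heart, and the step I expect to be most delicate, is the sign argument: it hinges on combining the convexity bound on the secant coefficient $c$ with stability tested against the truncations $w^\pm$, and on the strict positivity of first eigenfunctions. The latter requires the strong maximum principle (equivalently Harnack) for the Schr\"odinger-type operator $-\Delta_g-f'(u_i)$; this is where the uniform ellipticity of $-\Delta_g$ on $\mathcal{B}_R$ and $\lambda_1(-\Delta_g;\mathcal{B}_R)>0$ recorded after \eqref{hyppsi} enter, together with enough integrability of the potential $f'(u_i)$ to legitimize the truncation computations and the density extension of \eqref{semistability} from $C^1_0(\mathcal{B}_R)$ to $H^1_0(\mathcal{B}_R)$.
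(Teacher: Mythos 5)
Your argument is, in substance, the classical Brezis--V\'azquez/Dupaigne proof, which is exactly what the paper relies on: the authors do not prove Proposition~\ref{uniqueness} themselves, but quote it from \cite{BreVaz97,Dupaigne} and merely observe that the argument transfers to Riemannian models because $-\Delta_g$ is uniformly elliptic on $\mathcal{B}_R$ and $\lambda_1(-\Delta_g;\mathcal{B}_R)>0$. Your core steps --- the secant coefficient $c$ with the convexity bounds $c\le f'(u_2)$ on $\{w>0\}$ and $c\le f'(u_1)$ on $\{w<0\}$, testing the equation for $w$ against $w^{\pm}$, playing the result against the two stability inequalities to force equality, and the first-eigenfunction/strong maximum principle argument to show $w$ keeps a sign --- are the right ones and are correctly executed, modulo the integrability and density caveats you yourself flag (which are genuine but standard, and are the same ones the cited proofs must address).

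The one place where you assert more than you prove is the sentence ``this equality forces $f'$ to be constant along each segment $[u_1(x),u_2(x)]$, i.e.\ $f$ is affine on the essential range, say $f(s)=as+b$ there.'' What you actually have at that point is that $f$ is affine on each segment $[u_1(x),u_2(x)]$ with an \emph{a priori $x$-dependent} slope $a(x)=f'(u_1(x))=f'(u_2(x))$; every subsequent step ($-\Delta_g w=aw$ with constant $a$, hence $a=\lambda_1$, hence $b=0$) presupposes that these affine pieces glue into a single affine function on the union of the essential ranges, and that globalization is not automatic. It requires an additional argument, for instance: since $w>0$ a.e.\ realizes the minimum of the stable quadratic form of $-\Delta_g-f'(u_1)$, it is the (simple) principal eigenfunction of that operator; cross-testing the equations for $u_1$ and $w$ against each other yields $\int_{\mathcal{B}_R}\bigl(f(u_1)-f'(u_1)u_1\bigr)w\,dv_g=0$; and one uses that the essential range of an $H^1_0$ function on the connected ball is an interval of the form $[0,\operatorname{ess\,sup}u_i]$ (here $\lambda_1>0$ enters to see that $0$ belongs to it) to chain together the intervals on which $f'$ is constant. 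Without some such step, the conclusion ``$f(s)=\lambda_1 s$ on the essential ranges'' does not follow from what precedes it; with it, your write-up matches the proof the paper is importing.
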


Let us prove the improved Hardy-type inequality on Riemannian models following the argument 
of Theorem~4.1 in \cite{BreVaz97}.

\begin{proof}[Proof of Proposition~{\rm \ref{hardyprop}}]
Let $\xi\in C^1_0(\mathcal{B}_R)$ be a radial function and let $\varphi:=\xi\psi^{\frac{n}{2}-1}$. 
We claim that the following Poincar\'e inequality holds:
\begin{equation}\label{poincare}
\int_0^R\varphi_r^2\psi\,dr\geq \frac{1}{4}(\sup_{(0,R)}(\phi/\psi))^{-2}\int_0^R\varphi^2\psi\,dr.
\end{equation}
Indeed, using integration by parts (note that $\varphi(0)=\varphi(R)=0$) and Schwarz inequality we have
$$
\begin{array}{lll}
\displaystyle \int_0^R\varphi^2\psi\,dr
&=&
\displaystyle \int_0^R\varphi^2\phi_r\,dr = -2\int_0^R \varphi\varphi_r\phi\psi^{1/2}\psi^{-1/2}\,dr
\\
&\leq&
\displaystyle 2\left(\int_0^R\varphi_r^2\psi\,dr\right)^{1/2}\left(\int_0^R\varphi^2\frac{\phi^2}{\psi^2}\psi\,dr\right)^{1/2}
\\
&\leq&
\displaystyle 2\sup_{(0,R)}(\phi/\psi)\left(\int_0^R\varphi_r^2\psi\,dr\right)^{1/2}\left(\int_0^R\varphi^2\psi\,dr\right)^{1/2}.
\end{array}
$$
The claim follows immediately from the previous inequality (note that 
$\sup_{(0,R)}(\phi/\psi)\in(0,+\infty)$ either for $\psi(r)=\sin r$, $r$, or $\sinh r$).

Now, using $(\psi')^2-1=-K_\psi\psi^2$, $\psi''/\psi=-K_\psi$,
and an integration by parts, we obtain
$$
\begin{array}{ll}
\displaystyle\int_0^R \left(\xi_r^2-\frac{(n-2)^2}{4}\frac{\xi^2}{\psi^2}\right)\psi^{n-1}\,dr
&\hspace{-0.3cm}=\hspace{-0.1cm}
\displaystyle
\int_0^R \varphi_r^2\psi-\frac{n-2}{2}(\varphi^2)_r\psi'-\frac{(n-2)^2}{4}K_\psi\varphi^2\psi\,dr
\\
&\hspace{-0.3cm}=\hspace{-0.1cm}
\displaystyle
\int_0^R \varphi_r^2\psi+\frac{n-2}{2}\left(\frac{\psi''}{\psi}-\frac{(n-2)}{2}K_\psi\right)\varphi^2\psi\,dr
\\
&\hspace{-0.3cm}=\hspace{-0.1cm}
\displaystyle
\int_0^R \varphi_r^2\psi-\frac{n(n-2)}{4}K_\psi\varphi^2\psi\,dr.
\end{array}
$$
We obtain \eqref{improved:hardy} using Poincar\'e inequality \eqref{poincare}.

Note that the constant $H_{n,\psi}$ defined in \eqref{Hardy:ctant}, for the hyperbolic and 
elliptic spaces, is given by
\begin{equation}\label{H:sinh}
H_{n,\sinh}=\frac{1}{4}\left(\frac{\sinh^2R}{(\cosh R-1)^2}+n(n-2)\right)
\end{equation}
and
\begin{equation}\label{H:sin}
H_{n,\sin}=\frac{1}{4}\left(\frac{\sin^2R}{(\cos R-1)^2}-n(n-2)\right),
\end{equation}
respectively. This constant is clearly positive for all $R$ in the hyperbolic space. 
Instead, in the elliptic space it is positive for all $R<R_0$ (by definition of 
$R_0$). Therefore inequality \eqref{hardy} is an immediate consequence of 
\eqref{improved:hardy}.
\end{proof}

We are now able to prove Theorems~\ref{thm:hyper} and \ref{thm:ellip} establishing the extremal 
parameter and the extremal solution of \eqref{probla} for the exponential and the power nonlinearities
defined in \eqref{exp:nonlinearity} and \eqref{power:nonlinearity} 
in the hyperbolic and elliptic spaces.

\subsection{Proof of Theorem~\ref{thm:hyper}~$(i)$ and Theorem~\ref{thm:ellip}~$(i)$: Exponential nonlinearity}

Consider problem \eqref{probla} with the exponential nonlinearity 
\begin{equation}\label{exp:nonl}
f(u) = \frac{e^{u}}{\psi(R)^2}-\frac{n-1}{n-2}K_\psi.
\end{equation}

It is clear that $f$ is a positive increasing nonlinearity satisfying 
\eqref{superlinear} in the hyperbolic space (since $K_\psi=-1$). 
Instead in the elliptic space these assumptions hold if and only if 
$$
R<R_{\rm e}=\sup\{s\in(0,\pi/2):\sin^2 s<\frac{n-2}{n-1}\}=\arcsin\left(\sqrt{\frac{n-2}{n-1}}\right).
$$
In these cases, as we said in the introduction, the minimal solution 
$u_\lambda\in C^2(\mathcal{B}_R)$ 
of \eqref{probla} exists for $\lambda\in(0,\lambda^*)$ and its increasing limit $u^*$ 
is a (weak) solution of the extremal problem \eqref{probla} for $\lambda=\lambda^*$.

A simple computation shows that problem \eqref{probla} admits the explicit singular solution 
$$
u^\#(r)=-2\log \left(\frac{\psi(r)}{\psi(R)}\right)\qquad \textrm{with }\lambda=\lambda^\#=2(n-2).
$$ 
Note that $u^\#\in H^1_0(\mathcal{B}_R)$ if $n\geq 3$.

We claim that $\lambda^\#=\lambda^*$ and $u^\#=u^*$ whenever $n\geq 10$ for any geodesic 
ball if $\psi=\sinh$ and for balls with radius $R<\min\{ R_0,R_{\rm e}\}$ if $\psi=\sin$.
Indeed, by Proposition~\ref{uniqueness} and since $u^\#\in H^1_0(\mathcal{B}_R)$ is singular at the origin,
we only have to prove that $u^\#$ is semistable. That is, 
\begin{equation}\label{semi:exp}
\int_{0}^{R} \psi^{n-1} \xi_{r}^{2} \, dr
\geq 
2(n-2)\int_0^R\psi^{n-1}\frac{\xi^2}{\psi^2}\,dr
\end{equation}
for every radial $\xi\in C^1_0(\mathcal{B}_R)$ (note that 
$\lambda^\#f'(u^\#)= 2(n-2)/\psi^2$). However, this inequality clearly holds 
by \eqref{hardy}:
$$
\int_{0}^{R} \psi^{n-1} \xi_{r}^{2} \, dr 
\geq
\frac{(n-2)^2}{4} \int_{0}^{R}  \psi^{n-1} \frac{\xi^{2}}{\psi^2}   \, dr
\quad\textrm{for all radial }\xi \in C^1_0(\mathcal{B}_R),
$$
since $(n-2)^2/4\geq 2(n-2)$ whenever $n\geq 10$.
This proves Theorem~\ref{thm:hyper}~$(i)$ and Theorem~\ref{thm:ellip}~$(i)$.

\subsection{Proof of Theorem~\ref{thm:hyper}~$(ii)$ and Theorem~\ref{thm:ellip}~$(ii)$: Power nonlinearity}
Consider now
\begin{equation}\label{power:nonl}
f(u) = (u+\psi(R)^{-\frac{2}{m-1}})\left((u+\psi(R)^{-\frac{2}{m-1}})^{m-1}
-\frac{(m-1)n-(m+1)}{(m-1)n-2m}K_\psi\right)
\end{equation}
with $m>(n+2)/(n-2)$ (\textit{i.e.}, $n>2(m+1)/(m-1)=2+4/(m-1)$). Note that
in part (ii) of Theorems~\ref{thm:hyper} and \ref{thm:ellip} we assume $n\geq N(m)$, 
where $N(m)$ is defined in \eqref{dim:powers}. In particular, one has $m>(n+2)/(n-2)$.

In the hyperbolic (and Euclidean) space it is clear that $f$ is a positive 
increasing nonlinearity satisfying \eqref{superlinear}. In the elliptic 
space these assumptions hold whenever
$$
f(0)=\sin^{-\frac{2}{n-1}}R\left(\sin^{-2}R-\frac{(m-1)n-(m+1)}{(m-1)n-2m}\right)>0,
$$
or equivalently, 
$$
\sin^{2}R<\frac{(m-1)n-2m}{(m-1)n-(m+1)}=:h(m,n).
$$
However, since the function $h$ defined in the right hand side of the above inequality 
is increasing in $m$ in $((n+2)/(n-2),+\infty)$, we have that $f(0)>0$ (independently 
of $m$) if 
$$
R<R_{\rm p}=\sup\left\{R\in(0,\pi/2):\sin^2R\leq\frac{n-2}{n}=h\left(\frac{n+2}{n-2},n\right)\right\}.
$$ 
Note that $R_{\rm p}$ coincides with the number defined in Theorem~\ref{thm:ellip}~(ii).

As a consequence, $f$ is a positive increasing nonlinearity satisfying \eqref{superlinear}
in all the space forms (whenever $R<R_{\rm p}$ in the elliptic one). Therefore, the minimal solution 
$u_\lambda$ of \eqref{probla} exists for $\lambda\in(0,\lambda^*)$ and its 
increasing limit $u^*$ is a (weak) solution of the extremal problem \eqref{probla} for 
$\lambda=\lambda^*$.

In order to find the extremal solution and the extremal parameter, let us note that 
$$
u^\#(r)=\psi(r)^{-\frac{2}{m-1}}-\psi(R)^{-\frac{2}{m-1}},\quad \textrm{with }
\lambda=\lambda^\#=\frac{2}{m-1}\left(n-\frac{2m}{m-1}\right),
$$
is a weak solution of \eqref{probla}. Note that, since $m>(n+2)/(n-2)$, 
we have $\lambda^\#>0$ and $u^\#\in H^1_0(\mathcal{B}_R)$.

We proceed as for the exponential nonlinearity, \textit{i.e.}, we want to prove 
that $u^\#$ is a semistable solution of \eqref{probla} for $\lambda=\lambda^\#$. 
First, note that
$$
f'(u^\#)=\frac{m}{\psi^2}-\frac{(m-1)n-(m+1)}{(m-1)n-2m}K_\psi.
$$
Therefore, semistability condition for $u^\#$ turns out to be
\begin{equation}\label{stability:powers}
\int_0^R \psi^{n-1}\xi_r^2\,dr
\geq
\lambda^\# m
\int_0^R \psi^{n-1}\left(\frac{1}{\psi^2}-\frac{1}{m}\frac{(m-1)n-(m+1)}{(m-1)n-2m}K_\psi\right)\xi^2\,dr 
\end{equation}
for every radial $\xi\in C^1_0(\mathcal{B}_R)$. By Proposition~\ref{hardyprop} we have
$$
\int_{0}^{R} \psi^{n-1} \xi_{r}^{2} \, dr 
\geq
\frac{(n-2)^2}{4} \int_{0}^{R}  \psi^{n-1} \frac{\xi^{2}}{\psi^2}   \, dr
+
H_{n,\psi}\int_0^R \psi^{n-1}\xi^2\,dr
$$
for every radial $\xi\in C^1_0(\mathcal{B}_R)$, where $H_{n,\psi}$ is the constant defined in 
\eqref{Hardy:ctant}. Therefore, semistability condition \eqref{stability:powers} 
follows from the previous improved Hardy inequality if the following two conditions 
hold:
\begin{equation}\label{cond:powers1}
\frac{(n-2)^2}{4} \geq \lambda^\# m = \frac{2m}{m-1}\left(n-\frac{2m}{m-1}\right)
\end{equation}
and
\begin{equation}\label{cond:powers2}
\begin{array}{lll}
H_{n,\psi}
&\geq& \displaystyle 
-\frac{2m}{m-1}\left(n-\frac{2m}{m-1}\right)
\frac{1}{m}\frac{(m-1)n-(m+1)}{(m-1)n-2m}K_\psi
\\
&\geq& \displaystyle 
-\frac{2}{(m-1)^2}\Big((m-1)n-(m+1)\Big)K_\psi.
\end{array}
\end{equation}
Note that condition \eqref{cond:powers1} is equivalent to 
\begin{equation}\label{cond3}
n\geq N(m)=2+\frac{4m}{m-1}+4\sqrt{\frac{m}{m-1}}.
\end{equation}

In order to deal with condition \eqref{cond:powers2} we consider the 
hyperbolic and the elliptic cases separately.

{\scshape Hyperbolic case}: Assume $\psi(r)= \sinh r$ and $K_\psi=-1$. 
We have (remember \eqref{H:sinh}) that condition \eqref{cond:powers2} is nothing but
$$
H_{n,\sinh}=\frac{1}{4}\left(\frac{\sinh^2 R}{(1-\cosh R)^2}+n(n-2)\right)
\geq
\frac{2}{(m-1)^2}\Big((m-1)n-(m+1)\Big). 
$$
It is clear that this inequality holds if 
$$
\frac{n(n-2)}{4}
\geq
\frac{2}{(m-1)^2}\Big((m-1)n-(m+1)\Big),
$$
or equivalently,
$$
n(n-2)(m-1)^2 \geq 8(m-1)(n-1)-16
$$
which is true whenever $m > \frac{n+2}{n-2}$. This shows that \eqref{cond:powers2}
holds independently of $R$ and therefore $u^\#$ is a semistable solution of 
\eqref{probla} for $\lambda=\lambda^\#$.

{\scshape Elliptic case}: Assume $\psi(r)=\sin r$ and $K_\psi=1$. In this case 
condition \eqref{cond:powers2} is
\begin{equation}\label{cond:powers3}
H_{n,\sin}=\frac{1}{4}\left(\frac{\sin^2 R}{(1-\cos R)^2}-n(n-2)\right)
\geq
- \frac{2}{(m-1)^2}\Big((m-1)n-(m+1)\Big)
\end{equation}
(rememeber \eqref{H:sin}). This condition clearly holds since we are assuming $R<R_0$, 
and hence, $H_{n,\sin}>0$. Therefore, in the elliptic case $u^\#$ is also a semistable 
solution.

We have thus obtained that $u^\#$ is a semistable solution of \eqref{probla} 
for $\lambda=\lambda^\#$ when \eqref{cond3} holds for any geodesic ball in the hyperbolic 
space and for geodesic balls of radius $R<\min\{R_0,R_{\rm p}\}$ in the elliptic one.  
Moreover, since it is singular at the origin, we obtain that $\lambda^\#=\lambda^*$ 
and $u^\#=u^*$ by Proposition~\ref{uniqueness}. This proves Theorem~\ref{thm:hyper}~$(ii)$ and
Theorem~\ref{thm:ellip}~$(ii)$ .

\bigskip
\noindent\textit{Acknowledgments.}
The authors would like to thank Xavier Cabr\'e for useful conversations on the topic of this paper.
The authors were supported by grants MINECO MTM2011-27739-C04 (Spain) and GENCAT 2009SGR-345 (Catalunya). 
The first author is also supported by PRIN09 project \textit{Nonlinear elliptic problems in the study 
of vortices and related topics} (Italy). The second author is also supported by ERC grant 320501 
(ANGEOM project). 



\end{document}